\documentclass[10pt,a4paper]{article}
\usepackage{amsmath}
\usepackage{amsfonts}
\usepackage{amssymb}
\usepackage{amsthm}
\usepackage{graphicx}
\usepackage{hyperref}

\usepackage{algorithm}
\usepackage{algpseudocode}

\newtheorem{theorem}{Theorem}[section]

\newtheorem{definition}[theorem]{Definition}
\newtheorem{example}[theorem]{Example}
\newtheorem{remark}[theorem]{Remark}

\newcommand{\sgn}{\mathop{}\!\mathrm{sgn}}
\newcommand{\dist}{\mathop{}\!\mathrm{d}}

\begin{document}
%%%%%%%%%%%%%%%%

\title{Asymptotic KKT Conditions for Continuous-Time Nonlinear Programming}

\author{
Mois\'es R. C. do Monte\footnote{moises.monte@ufu.br} \\
Pontal Institute of Exact and Natural Sciences, \\
Federal University of Uberl\^andia (UFU),  \\ 
Ituiutaba, Minas Gerais, Brazil 
\and
Rodrigo B. Moreira\footnote{rbmoreira@uesc.br} \\
Department of Exact Sciences, \\
State University of Santa Cruz (UESC), \\
Ilhéus, Bahia, Brazil
\and
Valeriano A. de Oliveira\footnote{valeriano.oliveira@unesp.br} \\
Department of Mathematics, \\
S\~ao Paulo State University (UNESP), \\
%Institute of Biosciences, Humanities and Exact Sciences, \\
S\~ao Jos\'e do Rio Preto, S\~ao Paulo, Brazil
}

\date{}

\maketitle

\begin{abstract}
This paper addresses the class of continuous-time nonlinear programming problems with equality and inequality constraints. The paper presents necessary optimality conditions of the sequential form. To be more precise, a sequence of solutions converging to the optimal solution is demonstrated to exist, and such that Karush-Kuhn-Tucker-type conditions are satisfied asymptotically. It is shown that these sequential Karush-Kuhn-Tucker-type conditions also become sufficient for optimality under convexity assumptions. Sequential optimality conditions are a valuable tool for determining when to terminate a numerical method of solution. In this regard, an augmented Lagrangian-type method is proposed for numerically solving continuous-time programming problems. A convergence analysis concerning viability and optimality is presented. The performance of the method is evaluated by applying it to solve instances of continuous-time problems found in the literature.

\textbf{Keywords:} Continuous-time programming, Asymptotic optimality conditions, KKT, Augmented Lagrangian method.
\end{abstract}

%%%%%%%%%%%%%%%%%%%%%%%%%%%%%%%%%%%%%%%%%%%%%%%%%%%%%%%%%%%%%%%%%%%%%%

\section{Introduction} \label{sec:Intro}

The concept of continuous-time programming is frequently referenced in the literature and was initially proposed by Bellman \cite{bellman:1953} in his investigations of specific dynamic models of production and inventory, termed ``bottleneck processes''. In his initial formulation, Bellman considered a linear programming problem. However, his approach subsequently underwent significant expansion to encompass more general forms of continuous-time linear programming problems and certain classes of continuous-time nonlinear programming problems. For a summary of the results, ranging from optimality conditions to duality theory in continuous-time programming, as well as a fairly extensive list of relevant references, the reader is referred to \cite{monte:2019} and \cite{zalmai:1985}.

Linear continuous-time programming problems have also been treated in \cite{deOliveira2024,grinold:1969,reiland2:1980,Wu:2013,levinson:1966}. Regarding the nonlinear case, optimality conditions of the Karush-Kuhn-Tucker (KKT) type have been obtained by various authors, as can be seen in \cite{farr1:1974,reiland:1980,monte:2019,Vicanovic:2023a,Jovic:2022b}, to mention but a few.

In the context of nonlinear programming problems, the KKT optimality conditions are typically derived under the assumption of a certain regularity or constraint qualification condition. In the specific context of continuous-time optimization problems, Reiland \cite{reiland1:1980} employed a Zangwill-type condition, while Zalmai \cite{zalmai3:1985} utilized a Slater condition, for example. In recently published work, some classical constraint qualifications from mathematical programming have been adapted for the continuous-time context. In the papers by do Monte and de Oliveira \cite{monte:2019,monte:2020,monte:2021}, the authors established KKT optimality conditions under a linear independence constraint qualification, under a Mangasarian-Fromovitz constraint qualification, and under a constant rank constraint qualification, respectively.

The literature on numerical methods for solving this kind of problem includes contributions from Andreani et al. \cite{and:2004b}, Pullan \cite{pullan:1993}, Weiss \cite{weiss:2008}, Wen et al. \cite{wen:2012}, and Wu \cite{wu:2016}. In general, iterative methods that bring the solution closer through the KKT optimality conditions do not consider whether some constraint qualification is verified. In essence, a local minimizer may not be KKT, but it can always be approximated by a sequence of approximate-KKT points.

In this scenario, a distinct type of optimality condition emerges: the sequential optimality condition. A vector $x$ is said to satisfy the sequential optimality condition defined by a mathematical proposition $P$ if there exists a sequence $\{x^{k}\}$, which converges to $x$ and satisfies the condition $P(x^{k})$ for all values of $k$. Typically, a sequential optimality condition is linked to a specific quantity, denoted as $\varepsilon_k$, which is required to approach zero. The natural termination criterion associated with the sequential optimality condition suggests that the execution of the algorithm should be stopped when $\varepsilon_k$ reaches a sufficiently small value.

In the finite-dimensional case, Andreani et al. \cite{Andreani2010,and:2011} provide adequate theoretical tools to justify stopping criteria for nonlinear programming solvers. Two sequential optimality conditions were introduced, designated as approximate KKT (AKKT) condition and approximate gradient projection (AGP) condition. An analysis of the algorithmic consequences was conducted. Other authors have also explored this subject, with notable contributions from \cite{and:2017,and:2019,kan:2021,helou:2020}. In particular, the development of sequential optimality conditions and their extensions has been further advanced in \cite{Haeser2011, Andreani2022, kan:2021, helou:2020}, including extensions of AKKT to nonsmooth settings, variational inequalities, conic programming structures, cardinality-constrained problems, among others, thereby reinforcing their relevance in modern constrained optimization.

The concept of approximate (or asymptotic) KKT conditions was introduced into the infinite-dimensional framework by Kanzow et al. \cite{Kanzow:2018}, where the authors analyzed a nonlinear programming problem within Banach spaces. The results presented in reference \cite{Kanzow:2018} were generalized by Borgens et al. \cite{boorgens:2020}, who defined weak asymptotic KKT (w-AKKT) points and strong asymptotic KKT (s-AKKT) points. Three types of constraint qualifications, which are related to the asymptotic optimality conditions, are introduced. It is demonstrated that these are the weakest possible constraint qualifications that ensure that a given AKKT point of the optimization problem in Banach spaces is also a KKT point. Furthermore, the authors propose an augmented Lagrangian method that generates w-AKKT sequences under suitable assumptions.

The assumptions set forth in B\"orgens et al. \cite{boorgens:2020} require that the problem be formulated within a reflexive space. However, given the nature of the continuous-time programming problems that are posed within the space $L^\infty$, the results of \cite{boorgens:2020} become inapplicable.

The sequential type optimality conditions were also extended to optimal control problems with mixed constraints, as demonstrated in reference Moreira and de Oliveira \cite{Moreira2024}. For further details, please refer to \cite{Moreira2024}. While a continuous-time programming problem can be addressed as an optimal control problem, whereby the theory developed in \cite{Moreira2024} can be applied to continuous-time programming problems, the multipliers would be in the space $L^{1}$. This differs from the typical approach in the literature on continuous-time programming, where the multipliers are typically in the space $L^\infty$.

The present work is intended to serve two distinct purposes. The initial objective is to derive customized sequential type optimality conditions for continuous-time programming problems. We define AKKT sequences and AKKT solutions in the context of continuous-time programming and demonstrate that every local optimal solution is an AKKT one. It is noteworthy that no constraint qualification is required, which is a common feature of sequential type optimality conditions. In addition, we show that every AKKT solution is a global optimal solution if convexity assumptions are satisfied. The second objective is to propose a suitable augmented Lagrangian-type method for numerically solving continuous-time problems. The Lagrangian method is a well-established and reliable approach for solving nonlinear programming problems. The AKKT concept presented herein is employed as a stopping criterion for the augmented Lagrangian algorithm. This is a crucial point, as algorithms for nonlinear programming frequently neglect to consider the manner in which the algorithm terminates. In many instances, the methods result in a sequence $\{x^{k}\}$, $k\in\mathbb{N}$, where the constraint qualification is only verified at the limit. It is noteworthy that, to the best of our knowledge, there is currently no numerical method for continuous-time programming in the literature that is capable of dealing with nonlinear problems.

This paper is organized as follows. In Section \ref{prelim}, we define the continuous-time programming problem that this paper addresses, establish the necessary notation, and provide some fundamental definitions. Furthermore, we reformulate the continuous-time problem as an optimal control problem and demonstrate the relationship between the solution sets of the two problems. In Section \ref{AKKT}, we present the definitions of AKKT sequences and AKKT solutions, and we state and prove the main result. In Section \ref{ALM}, an augmented Lagrangian algorithm is described, and two theorems are provided that, under certain hypotheses, guarantee optimality and viability properties of sequential type for the limit points of the sequences generated by the algorithm, when they exist. Section \ref{appl} presents a few illustrative examples, which are employed for the purpose of conducting computational tests and subsequently analyzing the resulting data. Finally, in Section \ref{sec:Conclusion}, some concluding remarks are presented.

%%%%%%%%%%%%%%%%%%%%%%%%%%%%%%%%%%%%%%%%%%%%%%%%%%%%%%
\section{Preliminaries} \label{prelim}
%%%%%%%%%%%%%%%%%%%%%%%%%%%%%%%%%%%%%%%%%%%%%%%%%%%%%%

This study is focused on the following continuous-time programming problem that encompasses both equality and inequality constraints:
$$
\begin{array}{ll} 
\mbox{minimize} & P(x) = \displaystyle\int_{0}^{T} \phi(x(t),t)~ dt \\ 
\mbox{subject to} & h(x(t),t)= 0 ~ \mbox{a.e.} ~ t \in [0,T], \\
& g(x(t),t) \leq 0 ~ \mbox{a.e.} ~ t \in [0,T], \\
& x \in  L^{\infty}([0,T];\mathbb{R}^{n}),
\end{array}
\eqno{\text{(CTP)}}
$$
where $\phi : \mathbb{R}^{n} \times [0,T] \rightarrow \mathbb{R}$, $h = (h_1,h_2,\ldots,h_p) : \mathbb{R}^{n} \times [0,T] \rightarrow \mathbb{R}^{p}$ and $g = (g_1,g_2,\ldots,g_m) : \mathbb{R}^{n} \times [0,T] \rightarrow \mathbb{R}^m$ are given functions. The set of the equality and inequality constraints indices are denoted, respectively, by 
$$
I = \{1,2,\ldots,p\} \quad \text{and} \quad J = \{1,2,\ldots,m\}.% and N=\{1,2,\ldots, n\}.
$$

Throughout the paper, all integrals should be understood in the Lebesgue sense, and inequality signs between vectors should be read component-wise.

The feasible set of (CTP) is denoted by $\Omega$, that is,
$$
\Omega = \{ x \in L^{\infty}([0,T];\mathbb{R}^{n})  :  h(x(t),t) = 0, ~ g(x(t),t) \leq  0 ~ \mbox{a.e.} ~ t \in [0,T]\}.
$$

Let $r > 0$ be a real number and $\bar{x} \in \Omega$ be a feasible solution. For almost every $t \in [0,T]$, the following notations will be used throughout the paper:
\begin{equation*}
B_{r}(t) = \{ x \in \mathbb{R}^{n}  :  \| x-\bar{x}(t) \| < r \} \quad \text{and} \quad 
\bar{B}_{r}(t) = \{x \in \mathbb{R}^{n}  :  \| x-\bar{x}(t) \| \leq r \}.
\end{equation*}

\begin{definition} \label{local-global} 
A feasible solution $\bar{x} \in \Omega$ is said to be a local optimal solution for (CTP) if there exists $r > 0$ such that $P(\bar{x}) \leq P(x)$ for all $x \in \Omega$ such that $x(t)\in B_{r}(t)$ for almost every $t\in [0,T]$. In addition, $\bar{x} \in \Omega$ is said to be a global optimal solution if $P(\bar{x}) \leq P(x)$ for all $x \in \Omega$.
\end{definition}

\begin{definition}
The Lagrangian function $L : \mathbb{R}^{n} \times \mathbb{R}^{p} \times\mathbb{R}^{m} \times [0,T] \rightarrow \mathbb{R}$ associated to (CTP) is defined as 
$$
L(x,u,v,t) = \phi(x,t) + \sum_{i=1}^{p} u_{i}h_{i}(x,t) + \sum_{j=1}^{m} v_{j}g_{j}(x,t) ~ \mbox{a.e.} ~ t \in [0,T].
$$
\end{definition}

The basic assumptions will be said to be satisfied at $\bar{x} \in \Omega$ if there exists $r>0$ such that:
\begin{itemize}
\item[(H1)] $\phi(\cdot,t)$ is continuously differentiable a.e. $t \in [0,T]$;  
\item[] $\phi(x,\cdot)$ is Lebesgue measurable for each $x$;
%\item[] \textcolor{blue}{$\vert \phi(x,t) \vert + \Vert \nabla_x \phi(x,t) \Vert \leq c_{\phi}$ for all $x \in B_{r}(t)$ a.e. $t \in [0,T]$ for some $c_{\phi} > 0$.}
\item[] $\vert \phi(\bar{x}(\cdot),\cdot) \vert$ is integrable on $[0,T]$;
\item[] there exists an integrable function $c_\phi$ on $[0,T]$ such that $$\vert \phi(x,t) - \phi(y,t) \vert \leq c_{\phi}(t) \Vert x - y \Vert ~ \forall \, x,y \in \bar{B}_{r}(t) ~ \text{a.e.} ~ t \in [0,T];$$
\item[(H2)] $(h,g)(\cdot,t)$ is continuously differentiable a.e. $t \in [0,T]$; 
\item[] $(h,g)(x,\cdot)$ is Lebesgue measurable for each $x$; 
%\item[] \textcolor{blue}{$\Vert (h,g)(x,t) \Vert + \Vert \nabla_x (h,g)(x,t) \Vert \leq c_{h,g}$ for all $x \in B_{r}(t)$ a.e. $t \in [0,T]$ for some $c_{h,g} > 0$.}
\item[] $\Vert (h,g)(\bar{x}(\cdot),\cdot) \Vert$ is integrable on $[0,T]$;
\item[] there exists an integrable function $c_{h,g}$ on $[0,T]$ such that $$\Vert (h,g)(x,t) - (h,g)(y,t) \Vert \leq c_{h,g}(t) \Vert x - y \Vert ~ \forall \, x,y \in \bar{B}_{r}(t) ~ \text{a.e.} ~ t \in [0,T].$$
\end{itemize}
\medskip

\begin{remark}
It is evident that the assertions below are a direct consequence of (H1)--(H2):
\begin{itemize}
\item[(A1)] there exists an integrable function $k_{\phi}$ such that
$$
\vert \phi(x,t) \vert \leq k_{\phi}(t) ~ \forall \, x \in \bar{B}_{r}(t) ~ \mbox{a.e.} ~ t \in [0,T];
$$ 
\item[(A2)] there exists an integrable function $k_{h,g}$ such that
$$
\Vert (h,g)(x,t) \Vert \leq k_{h,g}(t) ~ \forall \, x \in \bar{B}_{r}(t) ~ \mbox{a.e.} ~ t \in [0,T].
$$
\end{itemize}
\end{remark}

This section will conclude with the definition of stationary solutions for the unconstrained problem.

\begin{definition} \label{stationary} 
Let $\bar{x} \in L^{\infty}([0,T];\mathbb{R}^{n})$ such that (H1) is valid. The solution $\bar{x} \in L^{\infty}([0,T];\mathbb{R}^{n})$ is said to be a stationary solution for the unconstrained problem
\begin{equation*} \label{UP}
\begin{array}{ll} 
\mbox{minimize} & P(x) = \displaystyle\int_{0}^{T} \phi(x(t),t)~ dt \\ 
\mbox{subject to} & x \in  L^{\infty}([0,T];\mathbb{R}^{n})
\end{array}
\end{equation*}
when 
$$
\nabla_x \phi(\bar{x}(t),t) = 0 ~ \text{a.e.} ~ t \in [0,T].
$$
\end{definition}

\section{Asymptotic Karush-Kuhn-Tucker Conditions} \label{AKKT}

This section is devoted to the definition of Karush-Kuhn-Tucker solutions of asymptotic type and the proof that every local optimal solution of (CTP) satisfies this definition.

\begin{definition} \label{Def_Seq_AKKT}
%Let $\bar{x} \in \Omega$ such that (H1) and (H2) are valid. 
A sequence $\{(x^{k},u^{k},v^{k})\}_{k \in \mathbb{N}} \subset L^{\infty}([0,T];\mathbb{R}^{n} \times \mathbb{R}^{p} \times\mathbb{R}^{m})$ is called an Asymptotic Karush-Kuhn-Tucker (AKKT) sequence for (CTP) %at $\bar{x} \in \Omega$ 
if
\begin{align}
& \int_0^T \nabla_{x}L(x^{k}(t),u^{k}(t),v^{k}(t),t) \cdot \gamma(t) ~ dt \to 0 ~ \forall \, \gamma \in L^{\infty}([0,T];\mathbb{R}^{n}), \label{2} \\ 
& v_{j}^{k}(t)g_{j}^{-}(x^k(t),t) \to 0 ~ \text{a.e.} ~ t \in [0,T], ~ j \in J, \label{3} \\
& v_j^k(t) \geq 0 ~ \text{a.e.} ~ t \in [0,T], ~ j \in J, \label{4}
\end{align}
where $g_j^{-}(x,t) := \max \{ -g_j(x,t),0 \}$ a.e. $t \in [0,T]$.
\end{definition}

\begin{definition}\label{pw-AKKT} 
Let $\bar{x} \in \Omega$ such that (H1) and (H2) are valid. The feasible solution $\bar{x} \in \Omega$ is said to be a pointwise asymptotic KKT (pw-AKKT) solution for (CTP) if there exists an AKKT sequence $\{(x^{k},u^{k},v^{k})\}_{k \in \mathbb{N}}$ for (CTP) %at $\bar{x}$ 
such that
$$
x^k (t) \rightarrow \bar{x}(t) ~ \mbox{ a.e.} ~ t \in [0,T].
$$
\end{definition}

\begin{theorem} \label{Teo_AKKT}
Let $\bar{x}$ be a local optimal solution for (CTP) and suppose that (H1)--(H2) are satisfied. Then $\bar{x}$ is a pw-AKKT solution for (CTP).
\end{theorem}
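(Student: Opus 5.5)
We will follow the classical penalty (localization) argument behind AKKT results in finite dimensions, carried out pointwise in $t$. Let $r>0$ be the smaller of the radii in Definition~\ref{local-global} and in (H1)--(H2); shrinking it to $r/2$ if needed, we may assume everything holds on $\bar B_r(t)$. For each $k\in\mathbb{N}$ consider the penalized, localized problem
\begin{equation*}
\mbox{minimize } F_k(x)=\int_0^T\Big[\phi(x(t),t)+\tfrac{k}{2}\big(\|h(x(t),t)\|^2+\|g^{+}(x(t),t)\|^2\big)+\tfrac12\|x(t)-\bar x(t)\|^2\Big]dt
\end{equation*}
over $x\in L^\infty$ with $x(t)\in\bar B_{r}(t)$ a.e., where $g^+=\max\{g,0\}$. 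By (A1)--(A2) the integrand is bounded by an integrable function on this set, so $F_k$ is finite there.

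\textbf{Step 1 (existence of minimizers).} The constraint is pointwise and the integrand $f_k(x,t)$ is Carathéodory (continuous in $x$, measurable in $t$). We therefore minimize pointwise: for a.e.\ $t$ the continuous function $f_k(\cdot,t)$ attains its minimum on the compact ball $\bar B_r(t)$. A measurable selection theorem (Filippov / Kuratowski--Ryll-Nardzewski for the argmin of a Carathéodory function over a measurable compact-valued multifunction) then gives a measurable $x^k$ with $x^k(t)\in\operatorname{argmin}_{\bar B_r(t)} f_k(\cdot,t)$. This $x^k$ is bounded, since $\bar x\in L^\infty$, so $x^k\in L^\infty$ and it minimizes $F_k$. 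This measurable-selection step is the main technical obstacle. An alternative that avoids it is to minimize via a minimizing sequence together with weak$^*$ compactness, but nonconvexity makes that route harder, so we prefer the pointwise route.

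\textbf{Step 2 (convergence $x^k(t)\to\bar x(t)$ a.e.).} Since $\bar x$ is feasible, $f_k(x^k(t),t)\le f_k(\bar x(t),t)=\phi(\bar x(t),t)$ pointwise. This gives
\begin{equation*}
\tfrac{k}{2}\big(\|h(x^k(t),t)\|^2+\|g^+(x^k(t),t)\|^2\big)\le 2k_\phi(t),
\end{equation*}
so the infeasibility of $x^k(t)$ tends to $0$ for a.e.\ $t$. Fix such a $t$ and take any cluster point $z$ of $x^k(t)$. Then $z\in\bar B_r(t)$ is feasible for the pointwise constraints and satisfies $\phi(z,t)+\tfrac12\|z-\bar x(t)\|^2\le\phi(\bar x(t),t)$.

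We must show $z=\bar x(t)$, which requires that $\bar x(t)$ minimize $\phi(\cdot,t)$ over the feasible points of $\bar B_r(t)$ for a.e.\ $t$. We derive this pointwise local optimality from Definition~\ref{local-global}. Suppose it fails on a set of positive measure. By measurable selection there is then a measurable $y$ with $y(t)\in\bar B_{r'}(t)$ ($r'<r$), $y$ feasible, and $\phi(y(t),t)<\phi(\bar x(t),t)$ on a set $E$ of positive measure. Setting $x=y$ on $E$ and $x=\bar x$ elsewhere gives a feasible competitor with $P(x)<P(\bar x)$, a contradiction. The ball must be taken slightly smaller than the optimality radius so that the open-ball condition of the definition applies. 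Hence $z=\bar x(t)$, and by compactness $x^k(t)\to\bar x(t)$ for a.e.\ $t$.

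\textbf{Step 3 (multipliers and AKKT conditions).} Define $u^k=k\,h(x^k,\cdot)$ and $v^k=k\,g^+(x^k,\cdot)\ge0$. These are measurable, and they are bounded, since $h,g$ are continuous on the compact ball; we use (H2) for the measurability of $t\mapsto c_{h,g}(t)$-type bounds if needed, and otherwise truncate. Condition \eqref{4} holds by construction. Condition \eqref{3} holds trivially, since $v^k_j(t)>0$ forces $g_j>0$, so $v_j^kg_j^-=0$.

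For \eqref{2}, for a.e.\ $t$ and large $k$ the point $x^k(t)$ lies in the open ball $B_r(t)$, so pointwise first-order optimality gives
\begin{equation*}
\nabla_xL(x^k(t),u^k(t),v^k(t),t)=-(x^k(t)-\bar x(t))\to0\ \text{a.e.}
\end{equation*}
On the exceptional set where $x^k(t)$ sits on the boundary, we redefine $u^k(t)=v^k(t)=0$ and replace $x^k(t)$ by $\bar x(t)$ there. These sets have measure tending to zero and vanish pointwise eventually, so the pointwise convergence is unaffected; we handle this by defining $(x^k,u^k,v^k)$ piecewise. The resulting gradient is dominated: $\|x^k-\bar x\|\le r$ on the good set, and on the bad set the gradient equals $\nabla_x\phi(\bar x(t),t)$, whose norm is bounded by the integrable $c_\phi$ thanks to the Lipschitz bound in (H1). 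Since the bad set shrinks pointwise to the empty set, dominated convergence applied to $\int\nabla_xL\cdot\gamma\,dt$ for $\gamma\in L^\infty$ yields \eqref{2}. Hence $\bar x$ is a pw-AKKT solution.
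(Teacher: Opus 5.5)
Your route is genuinely different from the paper's and works apart from one step. The paper does not use the pointwise-in-$t$ structure of (CTP). It applies Ekeland's principle to an exact-penalty functional on tuples $(\alpha,\theta,x,e)$, invokes a nonsmooth weak maximum principle for the auxiliary control problem $(\mathrm{R}_k)$, shows the Ekeland points $x^k$ are feasible and that $\varphi^k\to-\tfrac12$, and obtains \eqref{2} only as a weak $L^1$ limit via Dunford--Pettis along a subsequence. In exchange, its multipliers are automatically essentially bounded (convex-combination weights times $\vartheta^k\in[0,k]$ divided by $-\varphi^k$), and its $x^k$ are feasible with exact complementarity.

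You instead use that (CTP) has no coupling across times. Your measurable-selection lemma shows that $L^\infty$-local optimality gives pointwise optimality a.e., which reduces everything to the classical finite-dimensional quadratic-penalty localization at each $t$. This is much more elementary, needs no subsequence, and gives more than \eqref{2}: $\nabla_xL(x^k(t),u^k(t),v^k(t),t)\to 0$ a.e. with integrable majorant $r+c_\phi(t)$, hence strongly in $L^1$. The price is infeasible $x^k$ (harmless for Definition~\ref{pw-AKKT}) and an argument tied to the decoupled structure.

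The step that fails as written is the claim that $u^k=k\,h(x^k,\cdot)$ and $v^k=k\,g^+(x^k,\cdot)$ are essentially bounded ``since $h,g$ are continuous on the compact ball''. Continuity bounds them for each fixed $t$, not uniformly in $t$, and (H1)--(H2) give only integrable bounds. Here is a counterexample on $[0,1]$:
\begin{itemize}
\item[] take $\phi(x,t)=t^{-1/2}x$, $h(x,t)=t^{-1/4}x$, $g\equiv-1$;
\item[] (H1)--(H2) hold with $c_\phi(t)=t^{-1/2}$ and $c_{h,g}(t)=t^{-1/4}$, and $\bar x=0$ is the only feasible point;
\item[] your pointwise minimizer is $x^k(t)=-1/(k+\sqrt t)$, interior for $k>1/r$;
\item[] hence $u^k(t)=-k\,t^{-1/4}/(k+\sqrt t)$, which is unbounded near $t=0$ for every $k$.
\end{itemize}
Definition~\ref{Def_Seq_AKKT} requires $L^\infty$ multipliers, so this step must be fixed.

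Plain truncation would destroy the identity $\nabla_xL=-(x^k-\bar x)$, but your own exceptional-set device repairs it. Step~2 gives
$\|u^k(t)\|^2+\|v^k(t)\|^2=k^2\big(\|h(x^k(t),t)\|^2+\|g^+(x^k(t),t)\|^2\big)\le 4k\,k_\phi(t)$.
Hence $S_k:=\{t:\|u^k(t)\|+\|v^k(t)\|>k\}\subset\{t:k_\phi(t)>k/8\}$, so a.e.\ $t$ lies outside $S_k$ for all large $k$. Add $S_k$ to the set where you set $x^k=\bar x$ and $u^k=v^k=0$. The multipliers are then bounded by $k$, and your dominated-convergence argument goes through unchanged.

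One bookkeeping point: pose the penalized problem on the same closed ball on which you prove pointwise optimality, chosen strictly inside the open optimality ball. If you prove optimality only on $\bar B_{r'}(t)$ with $r'<r$ but penalize over $\bar B_r(t)$, cluster points $z$ with $r'<\|z-\bar x(t)\|\le r$ are not excluded.
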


\begin{proof}
%We reduce $r > 0$, if necessary, such that $P(\bar{x}) \leq P(x)$ for all $x \in \Omega$ such that $x(t) \in B_{r}(t)$ a.e. $t \in [0,T]$, and the Hypotheses (H1)--(H3) are satisfied. 
%Let $\bar{\alpha}$ be given as in (\ref{8}). It follows from Proposition \ref{Prop1} that $(\bar{\alpha},\bar{x})$ is a weak local minimizer for (OCP). 

%\textcolor{red}{Let $\tilde{r} < \min\{r,r/k_{\phi}\}$, where $k_{\phi} > 0$ is the constant in (A1).}

The proof will proceed in a series of steps.

\noindent\textit{Application of the Ekeland Variational Principle.} We start by defining $f: \mathbb{R}^n \times [0,T] \rightarrow \mathbb{R}$ as
$$
f(x,t) = \max\{0,\vert h_1(x,t) \vert,\ldots,\vert h_p(x,t) \vert,g_1(x,t),\ldots,g_m(x,t)\} ~ \text{a.e.} ~ t \in [0,T].
$$
For the purpose of applying the Ekeland Variational Principle, we define $X = W^{1,1}([0,T];\mathbb{R}) \times W^{1,1}([0,T];\mathbb{R}) \times L^{\infty}([0,T];\mathbb{R}^{n}) \times \mathbb{R}$ and
\begin{align*}
    W = \left\{ (\alpha, \theta, x, e) \in X ~ 
    \left\vert ~ 
    \begin{array}{l} 
    \dot{\alpha}(t) = \phi(x(t),t) ~ \mbox{a.e.} ~ t \in [0,T],\\
    \dot{\theta}(t) = f(x(t),t) ~ \mbox{a.e.} ~ t \in [0,T], \\
    (\alpha(0),\theta(0)) = (0,0), \\
    x(t) \in \bar{B}_{\tilde{r}}(t) ~ \mbox{a.e.} ~ t \in [0,T], ~ e \in \mathbb{R} 
\end{array}\right.\right\},
\end{align*}
where $0<\tilde{r}<r$ and $r>0$ comes from the local optimality of $\bar{x}$. We also define $d_{W} : W \times W \rightarrow \mathbb{R}$ by
$$
d_{W} \big( (\alpha, \theta,x, e),(\alpha^{\prime}, \theta^{\prime},x^{\prime}, e^{\prime}) \big) = \vert e - e^{\prime} \vert + \int_{0}^{T} \Vert x(t)-x^{\prime}(t) \Vert ~dt.
$$
Finally, for each $k\in\mathbb{N}$, we define the function $\ell_{k} : W \rightarrow \bar{\mathbb{R}}$ as 
$$
\ell_{k}(\alpha, \theta, x, e) = \max\left\{e - \bar{\alpha}(T) + \dfrac{1}{k^{2}}, \vert \alpha(T) - e \vert \right\} + k\dist_{\mathbb{R}_{-}}(\theta(T)),
$$
where $\dist_{\mathbb{R}_{-}}(\theta)$ denotes the distance from $\theta$ to the set $\mathbb{R}_{-}$.

It is a simple matter to show that $(W,d_{W})$ is a complete metric space and $\ell_k$ is lower semicontinuous in $(W,d_{W})$ for all $k$.
    
Let us consider the following sequence of optimization problems:
\begin{equation} \label{Rk}
\begin{array}{ll} \mbox{minimize} & \ell_{k}(\alpha, \theta, x, e) \\ 
\mbox{subject to} & (\alpha,\theta,x,e) \in W.
\end{array}
\end{equation}
Let $\bar{\alpha} : [0,T] \rightarrow \mathbb{R}$ be given as
$$
\bar{\alpha}(t) = \int_0^t \phi(\bar{x}(t),t) \, dt ~ \text{a.e.} ~ t \in [0,T]
$$
and $\bar{\theta} : [0,T] \rightarrow \mathbb{R}$ be defined as $\bar{\theta}(t) \equiv 0$.
It is clear that $(\bar{\alpha}, \bar{\theta}, \bar{x}, \bar{\alpha}(T)) \in W$, so that $W \neq \emptyset$, and $\ell_{k}(\bar{\alpha}, \bar{\theta}, \bar{x}, \bar{\alpha}(T)) = 1/k^{2}$. Since $\ell_{k}$ is non-negative valued, it follows that $(\bar{\alpha}, \bar{\theta}, \bar{x}, \bar{\alpha}(T))$ is an ``$(1/k^{2})$-minimizer'' for the Problem \eqref{Rk}. By the Ekeland's Variational Principle \cite[Thm. 1.1]{Ekeland1974}, there exists a sequence $\{(\alpha^{k}, \theta^{k}, x^{k}, e^{k})\}_{k\in\mathbb{N}} \subset W$ such that, for each $k\in\mathbb{N}$,
\begin{align} 
\nonumber
    &\max\left\{e^{k} - \bar{\alpha}(T) + \dfrac{1}{k^{2}}, \vert\alpha^{k}(T) - e^{k}\vert\right\} + k\dist_{\mathbb{R}_{-}}(\theta^{k}(T)) \\
    & \hspace{0.5cm} \leq \max\left\{e - \bar{\alpha}(T) + \dfrac{1}{k^{2}}, \vert\alpha(T) - e \vert\right\} + k\dist_{\mathbb{R}_{-}}(\theta(T)) \nonumber \\
    & \hspace{1.0cm} + \frac{1}{k}d_{W}((\alpha, \theta, x, e),(\alpha^{k}, \theta^{k}, x^{k}, e^{k}))
\label{11}
\end{align} 
for all $(\alpha, \theta, x, e)\in W$ and also
\begin{align}\label{eq_dWP}
    d_{W}((\alpha^{k}, \theta^{k}, x^{k}, e^{k}), (\bar{\alpha}, \bar{\theta}, \bar{x}, \bar{\alpha}(T))) \leq \dfrac{1}{k} ~ \forall \, k \in \mathbb{N}.
\end{align}

The condition~\eqref{eq_dWP} implies that $e^{k} \to \bar{\alpha}(T)$ and that $x^{k}$ converges to $\bar{x}$ in the $L^1$ norm. By extracting a subsequence  (we keep the same index), we have that
\begin{align}
    x^k (t) \rightarrow \bar{x}(t) ~ \mbox{ a.e.} ~ t \in [0,T].
\label{conv_xk}
\end{align}
 
\noindent\textit{Application of the weak maximum principle.} 
Define the arc $\beta^{k} : [0,T] \rightarrow \mathbb{R}$ as $\beta^k(t) \equiv e^{k}$. Then $\beta^{k} \to \bar{\alpha}(T)$ uniformly.

We can express the minimization property \eqref{11} as follows: $(\alpha^{k}, \theta^{k}, x^{k}, \beta^{k})$ is a local minimizer for the following optimal control problem:
$$
\begin{array}{ll} 
    \mbox{minimize} & \max\left\{\beta(T) - \bar{\alpha}(T) + \dfrac{1}{k^{2}}, \vert\alpha(T) - \beta(T)\vert \right\} + k\dist_{\mathbb{R}_{-}}(\theta(T))\\
    & \quad + \dfrac{1}{k}\vert\beta(T) - \beta^{k}(T)\vert + \dfrac{1}{k}\displaystyle{\int_{0}^{T} \Vert x(t) - x^{k}(t) \Vert ~dt} \\ 
    \mbox{subject to} & \dot{\alpha}(t) = \phi(x(t),t) ~ \mbox{a.e.} ~ t \in [0,T], \\
    & \dot{\beta}(t) = 0 ~ \mbox{a.e.} ~ t \in [0,T], \\
    & \dot{\theta}(t) = f(x(t),t) ~ \mbox{a.e.} ~ t \in [0,T], \\
    & x(t) \in \bar{B}_{\tilde{r}}(t) ~ \mbox{a.e.} ~ t \in [0,T], \\
    & (\alpha(0),\beta(0),\theta(0)) \in \{0\}\times\mathbb{R}\times\{0\}, \\
    & (\alpha(T),\beta(T),\theta(T)) \in \mathbb{R}\times\mathbb{R}\times\mathbb{R}, \\
    & (\alpha, \beta, \theta, x) \in W^{1,1}([0,T];\mathbb{R}^3) \times  L^{\infty}([0,T];\mathbb{R}^{n}).
\end{array}
\eqno{(\text{R}_k)}
$$

The Hamiltonian function for ($\text{R}_k$) is given by 
$$
H_{k}(\alpha,\beta, \theta, \varphi, \chi, \psi,x, t) = \varphi \cdot \phi(x,t) + \chi \cdot 0 + \psi \cdot f(x,t) - \frac{1}{k}\Vert x - x^{k}(t) \Vert ~ \mbox{a.e.} ~ t \in [0,T].
$$
Applying the version of the weak maximum principle in \cite[Prop. 1]{dePinho2009}, for each $k \in \mathbb{N}$, we ensure the existence of ($\varphi^{k}, \chi^{k}, \psi^{k}) \in W^{1,1}([0,T];\mathbb{R}^{3})$ and $\zeta^{k} \in L^{1}([0,T];\mathbb{R}^{n})$, as well as an integrable function $\eta$, which does not depend on $k$, such that, 
\begin{align}
\nonumber
    &((-\dot{\varphi}^{k}(t), -\dot{\chi}^{k}(t), -\dot{\psi}^{k}(t)),\zeta^{k}(t)) \\
    &\hspace{1.0cm} \in  \mathrm{co} \left[\partial_{\alpha,\beta, \theta, x} H_{k}(\alpha^{k}(t), \beta^{k}(t), \theta^{k}(t), \varphi^{k}(t), \chi^{k}(t), \psi^{k}(t), x^{k}(t),t) \right], 
    \label{30} \\
    & \zeta^{k}(t) \in \eta(t) \partial d_{\bar{B}_{\tilde{r}}(t)}(x^{k}(t)) \subset N_{\bar{B}_{\tilde{r}}(t)}(x^{k}(t)),
    \label{31}
\end{align}   
for almost every $t \in [0,T]$, and
\begin{align}
\nonumber
    & (\varphi^{k}(0), \chi^{k}(0), \psi^{k}(0),-\varphi^{k}(T), -\chi^{k}(T), -\psi^{k}(T)) \nonumber \\
    & \hspace{0.6cm} \in \partial\left[\max\left\{\beta^{k}(T) - \bar{\alpha}(T) + \dfrac{1}{k^{2}}, \vert\alpha^{k}(T) - \beta^{k}(T)\vert\right\} + k\dist_{\mathbb{R}_{-}}(\theta^{k}(T)) \right] \nonumber \\
    & \hspace{1.0cm} + \left[\{0\}\times \{0\}\times\{0\}\times \{0\}\times\dfrac{1}{k}B \times \{0\}\right] \nonumber \\
    & \hspace{1.0cm} + N_{\{0\}\times\mathbb{R}\times\{0\}\times\mathbb{R}\times\mathbb{R}\times\mathbb{R}}(\alpha^{k}(0),\beta^{k}(0), \theta^{k}(0), \alpha^{k}(T), \beta^{k}(T), \theta^{k}(T)). \label{32}
\end{align}
for each $k \in \mathbb{N}$.
    
\noindent\textit{Derivation of the pw-AKKT sequence.} From (\ref{30}), by using properties of nonsmooth analysis (see Vinter  \cite{vinter:2010}), we obtain
\begin{align}
& (-\dot{\varphi}^{k}(t), -\dot{\chi}^{k}(t), -\dot{\psi}^{k}(t)), \zeta^{k}(t)) \nonumber \\
& \qquad \in \varphi^{k}(t)\nabla_{\alpha,\beta,\theta,x}\phi(x^k(t),t) + \psi^{k}(t)\, \mathrm{co}\, \partial_{\alpha,\beta,\theta,x} f(x^{k}(t),t) \nonumber \\
& \qquad \quad - \{0\} \times \{0\} \times \{0\} \times \frac{1}{k} B \label{Eq_1}
\end{align}
for each $k \in \mathbb{N}$ and almost every $t \in [0,T]$. It follows that $(-\dot{\varphi}^{k}(t), -\dot{\chi}^{k}(t), -\dot{\psi}^{k}(t)) = (0, 0, 0)$ for each $k\in\mathbb{N}$ and almost every $t\in [0,T]$. Therefore, $(\varphi^{k}, \chi^{k}, \psi^{k})$ is a constant function for each $k \in \mathbb{N}$. Furthermore, from (\ref{32}) we have that
\begin{align}
& (\varphi^{k}, \chi^{k}, \psi^{k},-\varphi^{k}, -\chi^{k}, -\psi^{k}) \nonumber \\
& \hspace{0.6cm} \in \partial\left[\max\left\{\beta^{k}(T) - \bar{\alpha}(T) + \dfrac{1}{k^{2}}, \vert\alpha^{k}(T) - \beta^{k}(T)\vert\right\}\right] + k \partial \dist_{\mathbb{R_-}}(\theta^{k}(T)) \nonumber \\
& \hspace{1.0cm} + \mathbb{R}\times \{0\}\times \mathbb{R}\times \{0\} \times\dfrac{1}{k}B \times \{0\}  ~ \forall \, k \in \mathbb{N}.
\label{Eq_revi_1}
\end{align}
Then, by employing the characterization of the subdifferential of the distance function by means of the normal cone (see \cite[Theorem 4.8.5]{vinter:2010}), we have
$$
-\psi^{k} \in k\left[ N_{\mathbb{R}_{-}}(\theta^{k}(T)) \cap B\right] = N_{\mathbb{R}_{-}}(\theta^{k}(T)) \cap kB ~ \forall \, k \in \mathbb{N}. 
$$
It follows that the limiting normal cone $N_{\mathbb{R}_{-}}(\theta^{k}(T))$ is nonempty and, consequently, $\theta^{k}(T) \leq 0$ for all $k \in \mathbb{N}$. Moreover, from the constraints in ($\text{R}_k$), since $\theta^{k} \in W^{1,1}([0,T]; \mathbb{R})$, we can write
$$
0 \geq \theta^{k}(T) = \theta^{k}(0) + \int_{0}^{T} f(x^{k}(t),t) dt = \int_{0}^{T}f(x^{k}(t),t) dt ~ \forall \, k \in \mathbb{N}.
$$
On the other hand, by its very definition, we know that $f(x^{k}(t),t) \geq 0$ for almost every $t \in [0,T]$, for all $k \in \mathbb{N}$. Therefore, $f(x^{k}(t),t) = 0$ for almost every $t \in [0,T]$, for all $k \in \mathbb{N}$. Again by its definition, we obtain
$$
\max\{0,\vert h_{1}(x^{k}(t), t) \vert,\ldots,\vert h_{p}(x^{k}(t), t) \vert,g_{1}(x^{k}(t), t), \ldots, g_{m}(x^{k}(t), t)\} = 0
$$
for almost every $t \in [0, T]$, for all $k \in \mathbb{N}$. This implies that
\begin{equation} \label{viability}
    h(x^{k}(t),t) = 0 \quad \text{and} \quad g(x^{k}(t),t) \leq 0 \text{ a.e. } t \in [0, T] ~ \forall \, k \in \mathbb{N}.
\end{equation}
Let us define, for each $k \in \mathbb{N}$, $\tilde{\ell}_k : \mathbb{R} \times \mathbb{R} \rightarrow \mathbb{R}$ as
$$
\tilde{\ell}_k(\alpha,\beta) = \max\left\{\beta - \bar{\alpha}(T) + \dfrac{1}{k^{2}}, \vert \alpha - \beta \vert \right\}.
$$
We will show that, for each $k \in \mathbb{N}$,
\begin{equation}
\tilde{\ell}_{k}(\alpha^{k}(T), \beta^{k}(T)) = \max\left\{\beta^{k}(T) - \bar{\alpha}(T) + \dfrac{1}{k^{2}}, \vert\alpha^{k}(T) - \beta^{k}(T)\vert\right\} > 0.
\label{Eq_tildelk}
\end{equation}
If not, $\tilde{\ell}_{k}(\alpha^{k}(T), \beta^{k}(T)) = 0$ for some $k \in \mathbb{N}$, since $\ell_{k}$ is nonnegative. But then
$$
\beta^k(T) - \bar{\alpha}(T) + \dfrac{1}{k^2} \leq 0 \quad \text{and} \quad \alpha^{k}(T) = \beta^{k}(T),
$$
so that
$$
\alpha^{k}(T) \leq \bar{\alpha}(T) - \dfrac{1}{k^{2}}.
$$
From the fact that $\bar{x},x^k \in W$, the inequality above can be rewritten as
$$
\int_0^T \phi(x^k(t),t) \, dt \leq \int_0^T \phi(\bar{x}(t),t) \, dt - \dfrac{1}{k^2},
$$
that is,
$$
P(x^k) \leq P(\bar{x}) - \dfrac{1}{k^2}.
$$
Note also that, since $x^k \in W$, $x^k(t) \in \bar{B}_{\tilde{r}}(t) \subset B_{r}(t)$ for almost every $t \in [0,T]$. These, together with (\ref{viability}), violate the optimality of $\bar{x}$ for (CTP).

The following estimate for $\partial \tilde{\ell}_{k}(\beta^{k}(T), \theta^{k}(T))$ is verified:
\begin{equation}
\partial\tilde{\ell}_{k}(\beta^{k}(T), \theta^{k}(T)) \subset \left\{ (a^{k}, b^{k} - a^{k}) \in \mathbb{R}\times\mathbb{R} : b^{k}\geq 0 , ~ b^{k} + \vert a^{k} \vert = 1 \right\}.
\label{Eq_rco_1}
\end{equation}
There are two cases to consider.
\begin{itemize}
    \item[(a)] $\alpha^{k}(T) = \beta^{k}(T)$. In this case, by continuity, it follows from \eqref{Eq_tildelk} that 
    $$
        \tilde{\ell}_{k}(\alpha, \beta) = \beta - \bar{\alpha}(T) + \dfrac{1}{k^{2}}
    $$
    for all $(\alpha, \beta)$ in a certain neighborhood of $(\alpha^{k}(T), \beta^{k}(T))$. Consequently, \eqref{Eq_rco_1} is true with $a^{k} = 0$ and $b^k = 1$.
   \item[(b)] $\alpha^{k}(T) \neq \beta^{k}(T)$.  In this case, the estimate \eqref{Eq_rco_1} will follow from the max rule (see \cite[Theorem 5.5.2]{vinter:2010}). First, note that, from the chain rule (see \cite[Theorem 5.5.1]{vinter:2010}),
    $$
        \partial |\alpha - \beta| \big|_{(\alpha,\beta)=(\alpha^{k}(T), \beta^{k}(T))} \subset \{(\tilde{a}, -\tilde{a}) : |\tilde{a}| = 1\}.
    $$
    By the max rule, we have
    \begin{align*}
        \partial\tilde{\ell}_{k}(\beta^{k}(T), \theta^{k}(T)) &\subset \{(1-\lambda^{k})(0,1)+\lambda^{k}(\tilde{a},-\tilde{a}) ~:~ \lambda^{k} \in [0, 1],~ |\tilde{a}|=1\} \\
        &= \{(\lambda^{k} \tilde{a}, 1-\lambda^{k}-\lambda^{k} \tilde{a}) ~:~ \lambda^{k} \in [0, 1],~ |\tilde{a}|=1\} ~ \forall \, k \in \mathbb{N}.
    \end{align*}    
    Setting $a^{k} =\lambda^{k} \tilde{a}$ and $b^{k}=1-\lambda^{k}$ for all $k \in \mathbb{N}$, yields $b^{k}\ge0$ and $b^{k}+|a^{k}|=1$ for all $k \in \mathbb{N}$, as desired.	
\end{itemize}

From \eqref{Eq_revi_1} and \eqref{Eq_rco_1}, we know that
%for all $k \in \mathbb{N}$,
%%
%\begin{align}
%    (\varphi^{k}, \psi^{k}) \in \mathbb{R}\times\mathbb{R}
%\label{Eq_psi_R}
%\end{align}
%%
%and
%
\begin{equation} \label{eq_nova1}
(\chi^{k},-\varphi^{k}, -\chi^{k}, -\psi^{k}) = \left(0, a^{k}, b^{k} - a^{k} + \dfrac{1}{k}\xi, \vartheta^{k}\right)
\end{equation}
with $b^k \geq 0$, $b^{k}+\vert a^{k} \vert = 1$, $\xi \in B$ and $\vartheta^{k} \in N_{\mathbb{R}_{-}}(\theta^{k}(T)) \cap kB = \mathbb{R}_{+} \cap kB$ for all $k \in \mathbb{N}$. Then,
\begin{equation} \label{eq_nova2}
0 = - \chi^{k} = b^{k} - a^{k} + \dfrac{1}{k}\xi
\end{equation}
and
\begin{equation}
    1 = b^{k}+|a^{k}| = b^{k} + \left\vert b^{k} + \dfrac{1}{k}\xi \right\vert ~ \forall \, k \in \mathbb{N}.
\label{Eq_bk}
\end{equation}
Since $\{b^{k}\}$ is bounded, by taking a subsequence (which we do not relabel), we may conclude that $b^{k} \to b$. Thus, passing to the limit in \eqref{Eq_bk}, it follows that
$$
b + \vert b \vert = 1 \quad \Rightarrow \quad b = \dfrac{1}{2}. 
$$
From \eqref{eq_nova1} and \eqref{eq_nova2}, we see that
$$ 
- \varphi^{k} = a^{k} = b^{k} + \dfrac{1}{k}\xi,
$$
which yields, after taking limits, to
$$
\varphi^{k} \to \varphi := -\dfrac{1}{2}. 
$$

By \eqref{eq_nova1} we know that $-\psi^{k} = \vartheta^{k}$ for all $k \in \mathbb{N}$. Using this, it follows from the last argument in (\ref{Eq_1}), after applying the max rule and an appropriate measurable selection theorem, that there exist  a sequence $\{\sigma^k\}_{k\in\mathbb{N}} \subset \mathbb{R}$ and sequences of measurable functions $\{\tilde{u}^k\}_{k\in\mathbb{N}}$, $\{\tilde{v}^k\}_{k\in\mathbb{N}}$ and $\{\mu^{k}\}_{k\in\mathbb{N}}$ satisfying
\begin{align}
\nonumber
    \zeta^{k}(t) = & ~ \varphi^{k}\nabla_{x}\phi(x^k(t),t) - \sum_{i=1}^{p} [\vartheta^{k}\tilde{u}_{i}^k(t)\sigma^k] \nabla_{x} h_{i}(x^{k}(t),t) \\
    &~ - \sum_{j=1}^{m} [\vartheta^{k}\tilde{v}_j^k(t)] \nabla_{x} g_{j}(x^{k}(t),t) - \mu^{k}(t) ~ \text{a.e.} ~ t \in [0,T] ~ \forall \, k \in \mathbb{N},
\label{Eqrevvv_1}
\end{align}
where $\sigma^k \in [-1,1]$, $\mu^{k}(t) \in (1/k)B$ and
\begin{align}
& \tilde{u}_i^k(t) \geq 0, ~ \tilde{v}_j^k(t) \geq 0 ~ \text{a.e.} ~ t \in [0,T] ~ \forall \, k \in \mathbb{N}, ~ i \in I, ~ j \in J, \label{nonneg} \\
& \sum_{i=1}^p \tilde{u}_i^k(t) + \sum_{j=1}^k \tilde{v}_j^k(t) \leq 1 ~ \text{a.e.} ~ t \in [0,T] ~ \forall \, k \in \mathbb{N}, \label{soma1} \\
& \tilde{v}_j^k(t) g_j(x^k(t),t) = 0 ~ \text{a.e.} ~ t \in [0,T] ~ \forall \, k \in \mathbb{N}, ~~ j \in J. \label{compl}
\end{align}
It is clear that $\mu^{k} \to 0$ in $L^{\infty}([0, T]; \mathbb{R}^{n})$. Since $\varphi^{k} \to \varphi = -\tfrac{1}{2}$, for sufficiently large $k$, $\varphi^k < 0$ and division by $-\varphi^{k}$ is well defined. We then discard the initial terms of all considered sequences. Dividing both sides of \eqref{Eqrevvv_1} by $-\varphi^{k}$ yields
\begin{align}
\nonumber
    \tilde{\zeta}^{k}(t) = & ~ - \nabla_{x}\phi(x^k(t),t)
    - \sum_{i=1}^{p} \frac{\vartheta^{k}\sigma^k}{(-\varphi^{k})}\tilde{u}_{i}^k(t)\nabla_{x} h_{i}(x^{k}(t),t) \\
    & ~ - \sum_{j=1}^{m} \frac{\vartheta^{k}}{(-\varphi^{k})}\tilde{v}_{j}^k(t)\nabla_{x} g_{j}(x^{k}(t),t)
    + \tilde{\mu}^{k}(t) ~ \text{a.e. } t \in [0,T] ~ \forall \, k \in \mathbb{N},
\label{eqtn}
\end{align}
where, for each $k \in \mathbb{N}$, $\tilde{\zeta}^{k}(t) := \zeta^{k}(t)/(-\varphi^{k})$, $\tilde{\mu}^{k}(t) := \mu^{k}(t)/\varphi^{k}$ for almost every $t \in [0,T]$. Clearly, $\tilde{\mu}^{k} \to 0$ in $L^{\infty}([0,T]; \mathbb{R}^{n})$. Moreover, from \eqref{31},
\begin{equation} \label{Eq_rev_31i}
\tilde{\zeta}^{k}(t) \in \dfrac{\eta(t)}{(-\varphi^{k})} \partial d_{\bar{B}_{\tilde{r}}(t)}(x^{k}(t))
\end{equation} 
and
\begin{equation} \label{Eq_rev_31ii}
\tilde{\zeta}^{k}(t) \in N_{\bar{B}_{\tilde{r}}(t)}(x^{k}(t)) ~ \text{a.e. } t \in [0,T] ~ \forall \, k \in \mathbb{N}.
\end{equation}

Let us define, for all $k \in \mathbb{N}$ and almost every $t \in [0,T]$,  
\begin{align}
    & \varepsilon^{k}(t) := \tilde{\mu}^{k}(t) - \tilde{\zeta}^{k}(t), \label{epsilonk} \\
    & u_{i}^{k}(t) := \dfrac{\vartheta^{k}\sigma^k}{(-\varphi^{k})}\tilde{u}_{i}^k(t), ~ i \in I, \label{uik} \\
    & v_{j}^{k}(t) := \dfrac{\vartheta^{k}}{(-\varphi^{k})}\tilde{v}_{j}^k(t), ~ j \in J. \label{vjk}
\end{align}

It is clear from (\ref{nonneg}) and (\ref{soma1}) that $\{(u^k,v^k)\}_{k \in \mathbb{N}} \subset L^\infty([0,T];\mathbb{R}^p \times \mathbb{R}^m)$. Also, it follows from \eqref{eqtn} that
\begin{equation} 
    \nabla_{x}L(x^{k}(t),u^{k}(t),v^{k}(t),t) = \varepsilon^{k}(t) ~ \text{a.e.} ~ t \in [0,T] ~ \forall \, k \in \mathbb{N}.
\label{eqLag}
\end{equation}

It is well known that the distance function is Lipschitz with constant equal to 1. Then, as indicated in \cite[Prop. 4.3.3]{vinter:2010}, $\partial d_{\bar{B}_{r}(t)}(x^{k}(t)) \subset \bar{B}_{1}$. Hence, from (\ref{Eq_rev_31i}) we see that $\vert\tilde{\zeta}^{k}(t)\vert \leq \vert\eta(t)\vert/\vert\varphi^{k}\vert$ for almost every $t \in [0, T]$ and for all $k \in \mathbb{N}$. Note that $\{\varphi^{k}\}$ bounded, because it is convergent. Therefore, $\{\tilde{\zeta}^{k}\}_{k\in \mathbb{N}}$ is uniformly integrably bounded, and by the Dunford-Pettis Theorem \cite[Thm. 2.5.1]{vinter:2010}, we can extract a subsequence (we do not relabel) such that $\tilde{\zeta}^{k} \rightharpoonup \zeta$, with $\zeta \in L^{1}([0, T];\mathbb{R}^{n})$. Note also that $x^k \rightharpoonup \bar{x}$ in $L^{1}([0, T];\mathbb{R}^{n})$, since $x^k \to \bar{x}$ in $L^{1}([0, T];\mathbb{R}^{n})$ (by \cite[Prop. 3.5]{brezis:2011}). Thus, appealing to the properties of upper semicontinuity of limiting normal cones, from (\ref{Eq_rev_31ii}), we obtain
\begin{equation*}
\zeta(t) \in N_{\bar{B}_{\tilde{r}}(t)}(\bar{x}(t)) = \{ 0 \} ~ \text{a.e.} ~ t \in [0,T].
\end{equation*}
Consequently, $\tilde{\zeta}^{k} \rightharpoonup 0$ in $L^{1}([0,T];\mathbb{R}^{n})$. Furthermore, since $\tilde{\mu}^{k} \to 0$ in \linebreak $L^{\infty}([0, T]; \mathbb{R}^{n})$, $\tilde{\mu}^{k} \to 0$ in $L^{1}([0,T];\mathbb{R}^{n})$ and $\tilde{\mu}^{k} \rightharpoonup 0$ in $L^{1}([0,T];\mathbb{R}^{n})$. 
Therefore, we see from \eqref{epsilonk} that $\varepsilon^{k} \rightharpoonup 0$ in $L^{1}([0,T];\mathbb{R}^{n})$. Hence, from (\ref{eqLag}),
$$
\int_0^T \nabla_{x}L(x^{k}(t),u^{k}(t),v^{k}(t),t) \cdot \gamma(t) ~ dt \to 0 ~ \forall \, \gamma \in L^{\infty}([0,T];\mathbb{R}^{n}),
$$
that is, (\ref{2}) is satisfied.

From (\ref{viability}), (\ref{compl}) and (\ref{vjk}), we have
\begin{align*}
    v_j^k(t)g_j^-(x^k(t),t) & = \dfrac{\vartheta^{k}}{(-\varphi^{k})} \tilde{v}_j^k(t) \max \{ -g_j(x^k(t),t),0 \} \\
    & = \dfrac{\vartheta^{k}}{(-\varphi^{k})} \tilde{v}_j^k(t) [-g_j(x^k(t),t)] \\
    & = 0 ~ \text{a.e.} ~ t \in [0,T] ~ \forall \, k \in \mathbb{N}, ~ j \in J,
\end{align*}
so that (\ref{3}) is verified. 

As seen above, $\varphi^k < 0$ and $\vartheta^{k} \in N_{\mathbb{R}_{-}}(\theta^{k}(T)) \cap kB = \mathbb{R}_{+} \cap kB$, so that $\vartheta^k \geq 0$ for all $k \in \mathbb{N}$. Then, it follows directly from \eqref{nonneg} and \eqref{vjk} that (\ref{4}) is satisfied as well. 

Therefore, we see that the sequence $\{(x^k,u^k,v^k)\}_{k \in \mathbb{N}}$ is an AKKT sequence according to Definition \ref{Def_Seq_AKKT}. From (\ref{conv_xk}) we can conclude that $\bar{x}$ is a pw-AKKT solution for (CTP). 

\end{proof}
\medskip

The preceding theorem indicates that pw-AKKT can be regarded as a genuine necessary optimality condition. It should be noted that no constraint qualification is imposed. Although the primary purpose of the pw-AKKT conditions is to validate stop criteria in numerical methods of optimization in the continuous-time context, they may also be useful for characterizing optimal solutions in situations where the classical KKT conditions are ineffective. This is demonstrated in the subsequent example. 

\begin{example}
We consider the continuous-time problem below:
$$
\begin{array}{ll}
\text{minimize} & P(x) = \displaystyle{\int_0^1 \left( t - \frac{1}{2} \right) x_1(t) ~ dt} \\
\text{subject to} & - \left( t - \frac{1}{2} \right) [x_1(t)]^3 + x_2(t) \leq 0 ~ \text{a.e.} ~ t \in [0,1], \\
& - x_2(t) \leq 0 ~ \text{a.e.} ~ t \in [0,1], \\
& x \in L^\infty([0,1];\mathbb{R}^2).
\end{array}
$$
It is easy to see that $\bar{x}(t) := (0,0)$ a.e. in $[0,1]$ is an optimal solution. Moreover, since
$$
\nabla_x \phi(\bar{x}(t),t) = \begin{bmatrix} t-\frac{1}{2} \\ 0 \end{bmatrix}, ~
\nabla_x g_1(\bar{x}(t),t) = \begin{bmatrix} 0 \\ 1 \end{bmatrix}, ~
\nabla_x g_2(\bar{x}(t),t) = \begin{bmatrix} 0 \\ -1 \end{bmatrix} ~ \text{a.e.} ~ t \in [0,1],
$$
the KKT conditions never hold. On the other hand, by defining
$$
x_1^k(t) = \left( t - \frac{1}{2} \right) \frac{1}{k}, ~ x_2^k(t) = 0, ~
v_1^k(t) = v_2^k(t) = \frac{k^2}{3 \left( t - \frac{1}{2} \right)^2} ~ \text{a.e.} ~ t \in [0,1] ~ \forall k \in \mathbb{N},
$$
we see that $\bar{x}$ is a pw-AKKT solution. Indeed, 
\begin{align*}
& \nabla_x \phi(x^k(t),t) + v_1^k(t) \nabla_x g_1(x^k(t),t) + v_2^k(t) \nabla_x g_2(x^k(t),t) \\
& \qquad = \begin{bmatrix} t-\frac{1}{2} \\ 0 \end{bmatrix}
        + v_1^k(t) \begin{bmatrix} -3 \left( t - \frac{1}{2} \right) [x_1^k(t)]^2 \\ 1 \end{bmatrix}
        + v_2^k(t) \begin{bmatrix} 0 \\ -1 \end{bmatrix} \\
& \qquad = 
\begin{bmatrix}
\left( t - \frac{1}{2} \right) -3 v_1^k(t) \left( t - \frac{1}{2} \right) [x_1^k(t)]^2 \\ v_1^k(t) - v_2^k(t)    
\end{bmatrix} \\
& \qquad = \begin{bmatrix} 0 \\ 0 \end{bmatrix} ~ \text{a.e.} ~ t \in [0,1] ~ \forall k \in \mathbb{N}.
\end{align*}
\end{example}

Subsequently, it is shown that, under convexity assumptions, AKKT also constitutes a sufficient optimality condition.

\begin{theorem} \label{Teo_Suf}
Let $\bar{x} \in \Omega$ be an pw-AKKT solution and $\{(x^{k}, u^{k}, v^{k})\}_{k \in \mathbb{N}}$ be an associated  AKKT sequence. Assume that (H1) and (H2) are satisfied, $\{x^k\}_{k \in \mathbb{N}}$ is a bounded sequence in $L^\infty([0,T];\mathbb{R}^n)$, and
\begin{equation} \label{Hip_eq_1}
\sum_{i=1}^{p} u_{i}^{k}(t) h_{i}(x^{k}(t),t) + \sum_{j=1}^{m} v_{j}^{k}(t) g_{j}(x^{k}(t),t) \geq 0 ~ \text{a.e.} ~ t \in [0,T] ~ \forall k \in \mathbb{N}.
\end{equation}
If, for almost every $t \in [0,T]$, $\phi(\cdot,t)$, $g_{j}(\cdot,t)$, $j = 1,\ldots,m$, are convex functions and $h_{i}(\cdot,t)$, $i = 1,\ldots,p$, are affine functions, then $\bar{x}$ is a global optimal solution. 
\end{theorem}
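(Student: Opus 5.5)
Fix an arbitrary feasible $x \in \Omega$; the goal is $P(\bar{x}) \le P(x)$. The plan is the standard convexity argument applied pointwise in $t$, then integrated, with the AKKT limit handled through the weak-type convergence \eqref{2}.

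For almost every $t$ and every $k$, convexity of $\phi(\cdot,t)$ gives
$$
\phi(x(t),t) \ge \phi(x^k(t),t) + \nabla_x\phi(x^k(t),t)\cdot (x(t)-x^k(t)).
$$
Next, write $\nabla_x\phi = \nabla_x L - \sum_i u_i^k \nabla_x h_i - \sum_j v_j^k \nabla_x g_j$. Since $h_i(\cdot,t)$ is affine and $h_i(x(t),t)=0$,
$$
\nabla_x h_i(x^k(t),t)\cdot(x(t)-x^k(t)) = -h_i(x^k(t),t).
$$
Since $v_j^k(t)\ge 0$ by \eqref{4}, convexity of $g_j(\cdot,t)$ and $g_j(x(t),t)\le 0$ give
$$
v_j^k(t)\nabla_x g_j(x^k(t),t)\cdot(x(t)-x^k(t)) \le -v_j^k(t) g_j(x^k(t),t).
$$
Combining these three facts and using hypothesis \eqref{Hip_eq_1} yields, almost everywhere,
$$
\phi(x(t),t) \ge \phi(x^k(t),t) + \nabla_x L(x^k(t),u^k(t),v^k(t),t)\cdot(x(t)-x^k(t)).
$$
Note that \eqref{3} is not even needed, since \eqref{Hip_eq_1} already controls the complementarity terms.

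Integrate over $[0,T]$. The main obstacle is the term
$$
\int_0^T \nabla_x L(x^k,u^k,v^k,t)\cdot(x-x^k)\,dt.
$$
Condition \eqref{2} only gives convergence against a \emph{fixed} $\gamma \in L^\infty$, whereas here the test function $x-x^k$ varies with $k$. The plan is to split it as
$$
\int \nabla_x L^k\cdot(x-\bar{x}) + \int \nabla_x L^k\cdot(\bar{x}-x^k).
$$
The first integral tends to $0$ by \eqref{2} with $\gamma = x-\bar{x}$. For the second, the bounds available are that $\bar{x}-x^k$ is bounded in $L^\infty$ (boundedness of $\{x^k\}$ was assumed exactly for this purpose) and tends to $0$ pointwise. \eqref{2} implies $\nabla_x L^k \rightharpoonup 0$ weakly in $L^1$ if these are $L^1$ functions, and then the Dunford--Pettis equi-integrability, combined with Egorov's theorem on the uniformly bounded, a.e.-convergent factor $\bar{x}-x^k$, makes the second integral vanish. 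I would try to justify that weak $L^1$ relative compactness, or, failing a clean argument, simply note that the integrand can be controlled this way; this is where I expect the argument to need care. If it fails, the fallback is to pass to the limit only along a subsequence and use the Lebesgue dominated convergence theorem where possible.

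Finally, $\int\phi(x^k(t),t)\,dt \to P(\bar{x})$. This follows from dominated convergence: continuity of $\phi(\cdot,t)$ gives pointwise convergence, and the bound (A1) (or the Lipschitz estimate in (H1) together with the boundedness of $x^k$) supplies the dominating function, valid once $x^k(t)$ lies in $\bar{B}_r(t)$, which holds for large $k$ after an Egorov-type restriction if necessary. Passing to the limit then gives $P(x)\ge P(\bar{x})$, so $\bar{x}$ is a global optimal solution.
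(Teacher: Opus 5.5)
Your argument is step for step the paper's proof. You use the same pointwise inequality $\phi(x(t),t)\ge\phi(x^k(t),t)+\nabla_x L(x^k(t),u^k(t),v^k(t),t)\cdot(x(t)-x^k(t))$, obtained from convexity, \eqref{4}, feasibility of $x$ and \eqref{Hip_eq_1}; as you note, \eqref{3} is never used. Then come integration and a product-convergence argument for the cross term.

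The paper handles the cross term by citing a lemma: a sequence converging weakly in $L^1$, times one bounded in $L^\infty$ and converging a.e., has convergent integrals. Your splitting with Dunford--Pettis and Egorov proves exactly that lemma. Your hesitation is unnecessary: a weakly convergent sequence in $L^1$ is norm-bounded by the uniform boundedness principle and relatively weakly compact, hence equi-integrable by Dunford--Pettis. The $L^1$ membership of $\nabla_x L(x^k(\cdot),u^k(\cdot),v^k(\cdot),\cdot)$ that you flag is also taken for granted in the paper.

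The step that does not work as written is the final one, and the paper's bare appeal to the Dominated Convergence Theorem has the same defect. (A1) and the Lipschitz bound in (H1) control $\phi(\cdot,t)$ only on $\bar{B}_r(t)$. Boundedness of $\{x^k\}$ in $L^\infty$ does not keep $x^k(t)$ in $\bar{B}_r(t)$. An Egorov restriction puts $x^k(t)$ in the ball for large $k$ only off a set $F$ of small measure, and on $F$ nothing in the hypotheses bounds $\phi(x^k(t),t)$. So no dominating function has been produced.

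The repair is to use convexity once more and prove only the inequality you actually need. By (H1), $\|\nabla_x\phi(\bar{x}(t),t)\|\le c_\phi(t)$. With $C:=\sup_k\|x^k\|_\infty+\|\bar{x}\|_\infty$,
\[
\phi(x^k(t),t)\ \ge\ \phi(\bar{x}(t),t)+\nabla_x\phi(\bar{x}(t),t)\cdot\bigl(x^k(t)-\bar{x}(t)\bigr)\ \ge\ \phi(\bar{x}(t),t)-C\,c_\phi(t),
\]
which is an integrable minorant. Fatou's lemma and continuity of $\phi(\cdot,t)$ then give $\liminf_k\int_0^T\phi(x^k(t),t)\,dt\ge P(\bar{x})$. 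Since your cross term vanishes, your integrated inequality yields $P(x)\ge\liminf_k\int_0^T\phi(x^k(t),t)\,dt\ge P(\bar{x})$.

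Taking $x=\bar{x}$ then shows a posteriori that $\int_0^T\phi(x^k(t),t)\,dt\to P(\bar{x})$. When $\{x^k\}$ comes from the proof of Theorem~\ref{Teo_AKKT}, $x^k(t)\in\bar{B}_{\tilde r}(t)$, so your (A1) domination is already valid there.
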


\begin{proof}
Let $x \in \Omega$. Since, by hypothesis, $\phi$, $g_{j}$, $j = 1,\ldots,m$, are convex functions and $h_{i}$, $i = 1,\ldots,p$, are affine functions, it follows, for almost every $t \in [0,T]$ and for all $k \in \mathbb{N}$, that
\begin{align}
& \phi(x(t),t) \geq \phi(x^{k}(t),t) + \nabla_{x} \phi(x^{k}(t),t) \cdot (x(t) - x^{k}(t)), \label{Eqqq_17} \\
& g_{j}(x(t),t) \geq g_{j}(x^{k}(t),t) + \nabla_{x} g_{j}(x^{k}(t),t) \cdot (x(t) - x^{k}(t)), ~ j = 1,\ldots,m, \label{Eqqq_18} \\
& h_{i}(x(t),t) = h_{i}(x^{k}(t),t) + \nabla_{x} h_{i}(x^{k}(t),t) \cdot (x(t) - x^{k}(t)), ~ i = 1,\ldots,p. \label{Eqqq_19}
\end{align}
Thus, from \eqref{Eqqq_17}, \eqref{Eqqq_18}, and \eqref{Eqqq_19}, we obtain
\begin{align*}
\phi(x(t),t) & + \sum_{i=1}^{p} u_{i}^{k}(t) h_{i}(x(t),t) + \sum_{i=1}^{m} v_{j}^{k}(t) g_{j}(x(t),t) \\
& \geq \phi(x^{k}(t),t) + \sum_{i=1}^{p} u_{i}^{k}(t) h_{i}(x^{k}(t),t) + \sum_{i=1}^{m} v_{j}^{k}(t) g_{j}(x^{k}(t),t) \\
& \quad + \nabla_{x} L(x^{k}(t),u^{k}(t),v^{k}(t),t) \cdot (x(t) - x^{k}(t)) ~ \text{a.e.} ~ t \in [0,T] ~ \forall k \in \mathbb{N}.
\end{align*}
From (\ref{4}), we know that $v_{j}^{k}(t) \geq 0$ for almost every $t \in [0,T]$, $j = 1,\ldots,m$. Then, from $x \in \Omega$, we get
$$
\sum_{i=1}^{p} u_{i}^{k}(t) h_{i}(x(t),t) = 0 \quad \text{and} \quad \sum_{i=1}^{m} v_{j}^{k}(t) g_{j}(x(t),t) \leq 0 ~ \text{a.e.} ~ t \in [0,T] ~ \forall k \in \mathbb{N}.
$$
Therefore, by (\ref{Hip_eq_1}), we obtain, for almost every $t \in [0,T]$ and for all $k \in \mathbb{N}$,
\begin{align*}
\phi(x(t),t) \geq \phi(x^{k}(t),t) + \nabla_{x} L(x^{k}(t),u^{k}(t),v^{k}(t),t) \cdot (x(t) - x^{k}(t)).
\end{align*}
Integrating both sides from $0$ to $T$,
\begin{align*}
\int_{0}^{T} \phi(x(t),t) ~dt \geq & \int_{0}^{T}\phi(x^{k}(t),t) ~dt \\ & + \int_{0}^{T} \nabla_{x} L(x^{k}(t),u^{k}(t),v^{k}(t),t) \cdot (x(t) - x^{k}(t)) ~dt
\end{align*}
for all $k \in \mathbb{N}$. From Definition \ref{Def_Seq_AKKT}, we know that $\nabla_{x} L(x^{k}(\cdot),u^{k}(\cdot),v^{k}(\cdot),\cdot) \rightharpoonup 0$ in $L^1([0,T];\mathbb{R}^n)$. By hypothesis, we have that $\{x(t) - x^{k}(t)\}_{k \in \mathbb{N}}$ is bounded in $L^\infty([0,T];\mathbb{R}^n)$, and from Definition \ref{pw-AKKT}, we know that $x(t) - x^{k}(t) \to x(t) - \bar{x}(t)$ for almost every $t \in [0,T]$. Hence, by \cite[Prop. 18]{Laurencot:2015},
$$
\lim_{k \to \infty} \int_{0}^{T} \nabla_{x} L(x^{k}(t),u^{k}(t),v^{k}(t),t) \cdot (x(t) - x^{k}(t)) ~dt = 0,
$$
so that
$$
\int_{0}^{T}\phi(x(t),t) ~dt \geq \lim_{k \to \infty} \int_{0}^{T} \phi(x^{k}(t),t) ~dt.
$$
Therefore, by the Dominated Convergence Theorem (see, for example, \cite[Thm. 3.25]{Gordon1994}), we conclude the result. 
\end{proof}

In regard to the assumption that the sequence $\{x^k\}_{k \in \mathbb{N}}$ is bounded as stated in the preceding theorem, we note that the sequence $\{x^k\}_{k \in \mathbb{N}}$ constructed in the proof of Theorem \ref{Teo_AKKT} has this property.

%Additionally, we note that in the proof of Theorem \ref{Teo_Suf}, conditions (\ref{Eqqq_17})--(\ref{Eqqq_19}) (related to the convexity assumptions) may be modified as follows:
%\begin{align*}
%& \phi(x(t),t) \geq \phi(x^{k}(t),t) + \nabla_{x} \phi(x^{k}(t),t) \cdot \eta(t) ~ \text{a.e.} ~ t \in [0,T], \\
%& g_{j}(x(t),t) \geq g_{j}(x^{k}(t),t) + \nabla_{x} g_{j}(x^{k}(t),t) \cdot \eta(t) ~ \text{a.e.} ~ t \in [0,T], ~ j = 1,\ldots,m, \\
%& h_{i}(x(t),t) = h_{i}(x^{k}(t),t) + \nabla_{x} h_{i}(x^{k}(t),t) \cdot \eta(t) ~ \text{a.e.} ~ t \in [0,T], ~ i = 1,\ldots,p,
%\end{align*}
%for some $\eta(t) = \eta(x(t),x^k(t))$ a.e. $t \in [0,T]$, $k \in \mathbb{N}$. The conditions above can be classified as a form of generalized convexity, which is referred to in the literature as invexity.

\section{The Augmented Lagrangian Method} \label{ALM}

This section presents the augmented Lagrangian method, accompanied by a concise examination of its viability and optimality.

For each $\rho>0$, let us define the augmented Lagrangian function $L_{\rho} : \mathbb{R}^{n} \times \mathbb{R}^{p} \times \mathbb{R}_{+}^{m} \times [0,T] \rightarrow \mathbb{R}$ as
\begin{align*}
L_{\rho}(x,u,v,t) := \phi(x,t) & + \frac{\rho}{2} \sum_{i=1}^{p} \left[ h_{i}(x,t) + \frac{u_{i}}{\rho}\right]^2 \\ 
& + \frac{\rho}{2} \sum_{j=1}^{m} \left[ \max \left\{ 0,g_{j}(x,t) + \frac{v_{j}}{\rho} \right\} \right]^2.
\end{align*}

\begin{algorithm} \caption{Augmented Lagrangian Method} 
%\vskip6pt

Let $M,~N$, $\gamma >1$, $\rho_{1}>0$ and $\tau\in (0,1)$. Let $\{\delta_k\}_{k \in \mathbb{N}} \subset \mathbb{R}_+$ such that $\delta_k \to 0$.

Choose $(\tilde{u}^{1},\tilde{v}^{1}) \in L^{\infty}([0,T]; \mathbb{R}^{p}) \times L^{\infty}([0, T]; \mathbb{R}_{+}^{m})$ satisfying
$$
\tilde{u}^{1}_{i}(t) \in [-M,M] ~ \text{a.e.} ~ t \in [0,T], ~ i \in I, ~
\tilde{v}^{1}_{j}(t) \in [0,N] ~ \text{a.e.} ~ t \in [0,T], ~ j \in J.
$$

Choose an arbitrary initial solution $x^{0}\in L^{\infty}([0,T];\mathbb{R}^{n})$. 

Define $V_{j}^{0}(t) := \max \{ g_{j}(x^{0}(t),t),0 \} ~ \text{a.e.} ~ t \in [0,T], ~ j \in J$.

Set $x^1 = x^0$ and $(u^{1},v^{1}) = (\tilde{u}^{1},\tilde{v}^{1})$.
    
Set $k=1$.

\begin{algorithmic}
%\State \textbf{Step 1.} If $(x^{k}, u^{k}, v^{k})$ satisfies a suitable termination criterion: STOP.
\State \textbf{Step 1.} If conditions below are fulfilled: STOP.
\begin{align*}
& \left\vert \int_0^T \nabla_x L(x^{k}(t),u^{k}(t),v^{k}(t)) \cdot \gamma(t) ~ dt \right\vert \leq \delta_k ~ \forall \, \gamma \in L^{\infty}([0,T];\mathbb{R}^n), \\
& \vert v_{j}^{k}(t)g_{j}^{-}(x^k(t),t) \vert \leq \delta_k ~ \text{a.e.} ~ t \in [0,T], ~ j \in J, \\
& v_j^k(t) \geq 0 ~ \text{a.e.} ~ t \in [0,T], ~ j \in J.
\end{align*} 

\State \textbf{Step 2.} Compute an approximate solution $x^{k}\in L^{\infty}([0,T];\mathbb{R}^{n})$ of 
%
%\begin{equation*} \label{Eq_sup_p}
$$
\text{minimize} ~ \int_{0}^{T} L_{\rho_{k}}(x(t),\tilde{u}^{k}(t),\tilde{v}^{k}(t),t) ~ dt ~ \text{over} ~ x \in L^\infty([0,T];\mathbb{R}^n).
\eqno{(\text{P}_k)}
$$
%\end{equation*}
%    
\State \textbf{Step 3.} Define, for each $k \in \mathbb{N}$, $V^{k} \in L^{\infty}([0,T],\mathbb{R}^{m})$ and $H^k \in L^{\infty}([0,T],\mathbb{R}^{n})$, respectively, as
\begin{align*}
& V_{j}^{k}(t) := \max\{g_{j}(x^{k}(t),t),-\tilde{v}_{j}^{k}(t)/\rho_{k}\} ~ \mbox{a.e.} ~ t \in [0,T], ~ j \in J, \\
& H^k(t) := h(x^k(t),t) ~ \mbox{a.e.} ~ t \in [0,T].
\end{align*}
\State \quad \textbf{If} $\max\{\|H^k\|_{\infty},\|V^{k}\|_{\infty}\} \leq \tau \max\{\|H^{k-1}\|_{\infty},\|V^{k-1}\|_{\infty}\}$, define $\rho_{k+1}=\rho_{k}$. 
\State \quad \textbf{Else}, $\rho_{k+1}=\gamma\rho_{k}$.
\State \textbf{Step 4.} Calculate 
\begin{align*}
& u_{i}^{k}(t) = \tilde{u}_{i}^{k}(t) + \rho_{k}h_{i}(x^{k}(t),t) ~ \mbox{a.e.} ~ t \in [0,T], ~ i \in I, \\
& v_{j}^{k}(t) = \max\{\tilde{v}_{j}^{k}(t)+\rho_{k}g_{j}(x^{k}(t),t),0\} ~ \mbox{a.e.} ~ t \in [0,T], ~ j \in J.
\end{align*}
\quad Define 
\begin{align*}
& \tilde{u}_{i}^{k+1}(t) = \mathrm{proj}_{[-M,M]}(u_{i}^{k}(t)) \in [-M,M] ~ \mbox{a.e.} ~ t \in [0,T], ~ i \in I, \\
& \tilde{v}_{j}^{k+1}(t) = \mathrm{proj}_{[0,N]}(v_{j}^{k}(t)) \in [0,N] ~ \mbox{a.e.} ~ t \in [0,T], ~ j \in J. \\
\end{align*}
\State \textbf{Step 5.} Set $k = k + 1$ and go to Step 1.
\Statex
\end{algorithmic}
\label{alg}
\end{algorithm}
 
The precise definition of an approximate solution in Step 2 of Algorithm \ref{alg} is as follows. There are multiple potential approaches for approximately solving the subproblems (P$_k$). For example, one may seek a global minimum, a local minimum, or a stationary point. Moreover, the convergence properties of the general method are subject to the approach employed in solving the subproblems. Accordingly, the concept of an approximate solution will be interpreted in accordance with the following hypothesis:
\begin{itemize}
\item[(H3)] There exists a sequence $\{\varepsilon^{k}\}_{k \in \mathbb{N}} \subset L^{1}([0, T]; \mathbb{R}^{n})$, with $\varepsilon^{k} \rightharpoonup 0$, %in $L^{1}([0,T];\mathbb{R}^{n})$ 
such that%, for each $(\rho_{k}, \tilde{u}^{k}, \tilde{v}^{k}) \in [0, +\infty) \times L^{\infty}([0,T];\mathbb{R}^{p}) \times L^{\infty}([0,T];\mathbb{R}_{+}^{m})$,
\begin{equation} \label{Eqr_1}
\nabla_{x}L_{\rho_{k}}(x^{k}(t), \tilde{u}^{k}(t), \tilde{v}^{k}(t),t) = \varepsilon^{k}(t) ~  \text{a.e.} ~ t \in [0,T] ~ \forall k \in \mathbb{N}.
\end{equation} 
\end{itemize}

The following theorem, under certain hypotheses, guarantees an optimality property for the limit points of the sequences generated by Algorithm \ref{alg}, when they exist.

%\begin{theorem} \label{Teo_Algo_Otima}
%Let $\bar{x}$ be a limit point of a sequence $\{x^{k}\}_{k\in \mathbb{N}}$ generated by Algorithm \ref{alg} in the sense that, under subsequences \textcolor{blue}{extraction} (we do not relabel), $x^{k}(t) \to \bar{x}(t)$ for almost every $t \in [0, T]$. If $\bar{x}$ is a feasible solution for (CTP) and Hypotheses (H1)--(H4) are satisfied, then $\bar{x}$ is a pw-AKKT solution.
%\end{theorem}
\begin{theorem} \label{Teo_Algo_Otima}
Let $\{x^{k}\}_{k\in \mathbb{N}}$ be a sequence generated by Algorithm \ref{alg} and $\bar{x} \in L^\infty([0,T];\mathbb{R}^n)$ satisfying $x^{k}(t) \to \bar{x}(t)$ for almost every $t \in [0, T]$. If $\bar{x}$ is a feasible solution for (CTP) and Hypotheses (H1)--(H3) are satisfied, then $\bar{x}$ is a pw-AKKT solution.
\end{theorem}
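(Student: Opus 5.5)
The plan is to use the multipliers produced in Step 4 of Algorithm \ref{alg}, namely $(u^k,v^k)$, and to show that $\{(x^k,u^k,v^k)\}_{k\in\mathbb{N}}$ is an AKKT sequence in the sense of Definition \ref{Def_Seq_AKKT}. Since $x^k(t)\to\bar{x}(t)$ a.e. by hypothesis and $\bar{x}\in\Omega$, Definition \ref{pw-AKKT} then gives the conclusion.

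\emph{Stationarity (\ref{2}) and sign condition (\ref{4}).} Differentiating $L_{\rho}$ in $x$ gives
\begin{equation*}
\nabla_x L_{\rho_k}(x^k(t),\tilde{u}^k(t),\tilde{v}^k(t),t) = \nabla_x\phi(x^k(t),t) + \sum_{i\in I}\bigl[\tilde{u}^k_i(t)+\rho_k h_i(x^k(t),t)\bigr]\nabla_x h_i(x^k(t),t) + \sum_{j\in J}\max\{0,\tilde{v}^k_j(t)+\rho_k g_j(x^k(t),t)\}\nabla_x g_j(x^k(t),t).
\end{equation*}
By Step 4, the brackets are exactly $u^k_i(t)$ and $v^k_j(t)$. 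Hence this expression equals $\nabla_x L(x^k(t),u^k(t),v^k(t),t)$, and (H3) gives $\nabla_x L(x^k,u^k,v^k,\cdot)=\varepsilon^k\rightharpoonup 0$ in $L^1$. Since $(L^1)^*=L^\infty$, this is precisely (\ref{2}). Condition (\ref{4}) holds because $v^k_j$ is defined as a maximum with $0$. I also need $u^k,v^k\in L^\infty$. This follows from $\tilde{u}^k,\tilde{v}^k$ being bounded, $\rho_k$ being finite, and $h,g$ being bounded near $\bar{x}$ by (A2). Strictly, (A2) gives integrable rather than essentially bounded bounds, so I may need $x^k$ bounded or to accept measurability only; I will note this point.

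\emph{Complementarity (\ref{3}), the main step.} Fix $j$ and a $t$ where $x^k(t)\to\bar{x}(t)$ and $g_j(\cdot,t)$ is continuous. I split on the behaviour of $\rho_k$.

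In the first case, $\rho_k\to\infty$. If $g_j(\bar{x}(t),t)<0$, then for large $k$ we have $g_j(x^k(t),t)\le g_j(\bar{x}(t),t)/2<0$. Since $\tilde{v}^k_j(t)\le N$, this gives $\tilde{v}^k_j(t)+\rho_k g_j(x^k(t),t)<0$, so $v^k_j(t)=0$ eventually. If $g_j(\bar{x}(t),t)=0$, then $g_j^-(x^k(t),t)\to 0$. Here I still need $v^k_j(t)g_j^-(x^k(t),t)\to 0$. When $g_j(x^k(t),t)<0$ and $v^k_j(t)>0$, we have $0<v^k_j(t)=\tilde{v}^k_j(t)+\rho_k g_j(x^k(t),t)\le N$, which gives the product bound $v^k_j(t)g_j^-(x^k(t),t)\le N g_j^-(x^k(t),t)\to 0$. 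When $g_j(x^k(t),t)\ge 0$ the product is zero.

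In the second case, $\rho_k$ is eventually constant, equal to $\bar\rho$. Then Step 3 forces $\|V^k\|_\infty\to 0$ geometrically, so $\max\{g_j(x^k(t),t),-\tilde{v}^k_j(t)/\bar\rho\}\to 0$ a.e. If $g_j(\bar{x}(t),t)<0$, this forces $\tilde{v}^k_j(t)\to 0$, and then $v^k_j(t)=\max\{\tilde{v}^k_j(t)+\bar\rho g_j,0\}=0$ eventually. If $g_j(\bar{x}(t),t)=0$, the same bound as before applies: the product is zero when $g_j(x^k(t),t)\ge 0$, and at most $N g_j^-(x^k(t),t)\to 0$ otherwise.

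The obstacle I expect is exactly this complementarity step. The key observation is uniform across both cases: whenever $g_j(x^k(t),t)<0$ and $v^k_j(t)>0$, one has $v^k_j(t)\le\tilde{v}^k_j(t)\le N$. So the product is always bounded by $N g_j^-(x^k(t),t)$. This reduces everything to the inactive situation $g_j(\bar{x}(t),t)<0$, which is handled separately in each case as above.
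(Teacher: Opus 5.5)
Your proof is correct and shares the paper's skeleton. Both use the Step~4 multipliers, get $\nabla_x L(x^k,u^k,v^k,\cdot)=\varepsilon^k\rightharpoonup 0$ in $L^1$ from (H3), get nonnegativity of $v^k$ from the maximum, and split on ``$\rho_k\to\infty$'' versus ``$\rho_k$ eventually constant'' for the complementarity condition \eqref{3}.

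The difference is inside that dichotomy. The paper partitions the indices $k$ by the sign of $g_j(x^k(t),t)$ (sets $N_1,N_2$) when $\rho_k\to\infty$. When $\rho_k$ is bounded, it partitions by which term attains the maximum in $V_j^k(t)$ (sets $N_3,N_4$). You split instead on whether $g_j(\bar x(t),t)<0$ or $g_j(\bar x(t),t)=0$. You then use the uniform estimate $0\le v_j^k(t)\,g_j^-(x^k(t),t)\le N\,g_j^-(x^k(t),t)$. It holds for every $k$, because $v_j^k(t)>0$ together with $g_j(x^k(t),t)<0$ forces $v_j^k(t)<\tilde v_j^k(t)\le N$.

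This estimate buys something real when $\rho_k\to\infty$. The paper concludes $v_j^k(t)=0$ for large $k\in N_1$ from the boundedness of $\tilde v_j^k(t)$ and of $g_j(x^k(t),t)$ alone. That conclusion requires $g_j(x^k(t),t)$ to stay bounded away from $0$, i.e.\ $g_j(\bar x(t),t)<0$. Suppose instead the constraint is active at $\bar x(t)$ and, say, $g_j(x^k(t),t)=-\rho_k^{-2}$. Then $\tilde v_j^k(t)+\rho_k g_j(x^k(t),t)=\tilde v_j^k(t)-\rho_k^{-1}$, which is positive whenever $\tilde v_j^k(t)>\rho_k^{-1}$. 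In that situation it is your bound that still gives $v_j^k(t)g_j^-(x^k(t),t)\to 0$.

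In the bounded case both routes are sound. The paper's argument there ($V_j^k(t)=g_j(x^k(t),t)\to 0$ on $N_3$, with $v_j^k(t)$ bounded) needs no active/inactive split.

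Your $L^\infty$ caveat is resolved by the algorithm itself rather than by (A2). We have $u^k=\tilde u^k+\rho_k H^k$ and $v_j^k=\tilde v_j^k+\rho_k V_j^k$, and Step~3 takes $H^k,V^k\in L^\infty$. The paper's proof leaves this point implicit.
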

\begin{proof}
Let us take
\begin{align*}
& u_{i}^{k}(t) := \tilde{u}_{i}^{k}(t) + \rho_{k}h_{i}(x^{k}(t),t) ~  \text{a.e.} ~ t \in [0,T], ~ i \in I \\
& v_{j}^{k}(t) := \max\{\tilde{v}_{j}^{k}(t)+\rho_{k} g_{j}(x^{k}(t),t),0\} ~  \text{a.e.} ~ t \in [0,T], ~ j \in J,
\end{align*}
where sequences $\{\rho_k\}_{k \in \mathbb{N}}$, $\{\tilde{u}^k\}_{k \in \mathbb{N}}$ and $\{\tilde{v}^k\}_{k \in \mathbb{N}}$ are provided by Algorithm \ref{alg}. From (\ref{Eqr_1}), we obtain
\begin{align*}
\varepsilon^{k}(t) &= \nabla_x \phi(x^{k}(t),t) + \sum_{i=1}^{p}u_{i}^{k}(t)\nabla_x h_{i}(x^{k}(t),t) + \sum_{j=1}^{m}v_{j}^{k}(t)\nabla_x g_{j}(x^{k}(t),t) \\
&= \nabla_x L(x^{k}(t), u^{k}(t), v^{k}(t),t)  ~  \text{a.e.} ~ t \in [0,T], ~ \forall k \in \mathbb{N}.
\end{align*}
Since, by (H3), $\varepsilon^k \rightharpoonup 0$ in $L^1([0,T];\mathbb{R}^n)$, condition (\ref{2}) of Definition \ref{Def_Seq_AKKT} follows. Condition (\ref{4}) follows directly from the definition of the sequence $\{v^k\}_{k \in \mathbb{N}}$ above.

The verification of condition (\ref{3}) will be divided into two distinct cases. In the first case, we assume that $\rho_k \to \infty$. Let $t \in [0,T]$ and $j \in J$. Let us define 
$$
N_1 = \{ k : g_j(x^k(t),t) < 0 \} \quad \text{and} \quad N_2= \{ k : g_j(x^k(t),t) \geq 0 \}.
$$
For $k \in N_1$ large enough, taking into account that $\{\tilde{v}_{j}^{k}(t)\}$ is bounded by construction and $\{g_{j}(x^{k}(t),t)\}$ is bounded for it is convergent, we get 
\begin{equation} \label{cond1}
v_{j}^{k}(t) = \max\{\tilde{v}_{j}^{k}(t)+\rho_{k} g_{j}(x^{k}(t),t),0\} = 0.
\end{equation}
If no $k \in N_1$ is sufficiently large, it is because $N_1$ is finite and, under subsequence extraction, can be disregarded. For $k \in N_2$, we have directly
\begin{equation} \label{cond2}
g_{j}^{-}(x^{k}(t),t) = \max\{-g_{j}(x^{k}(t),t),0\} = 0.
\end{equation}
Now, we assume that sequence $\{\rho_k\}$ is bounded. Observing its construction in Algorithm \ref{alg}, we see that $\rho_k = \rho_{\bar{k}}$ for all $k \geq \bar{k}$. This means that
\begin{equation*}
\max\{\|H^k\|_{\infty},\|V^{k}\|_{\infty}\} \leq \tau^{k-\bar{k}} \max\{\|H^{\bar{k}}\|_{\infty},\|V^{\bar{k}}\|_{\infty}\} ~ \forall k \geq \bar{k}.
\end{equation*}
Hence, $\|V^{k}\|_{\infty} \to 0$, since $\tau \in (0,1)$. In other words,
\begin{equation} \label{convVk}
V_{j}^{k}(t) = \max\{g_{j}(x^{k}(t),t),-\tilde{v}_{j}^{k}(t)/\rho_{k}\} \to 0 ~ \mbox{a.e.} ~ t \in [0,T], ~ j \in J.
\end{equation}
Let $t \in [0,T]$ and $j \in J$. Let us define 
$$
N_3 = \{ k : g_{j}(x^{k}(t),t) \geq -\tilde{v}_{j}^{k}(t)/\rho_{k} \} \quad \text{and} \quad N_4= \{ k : g_{j}(x^{k}(t),t) < -\tilde{v}_{j}^{k}(t)/\rho_{k} \}.
$$
For $k \in N_3$, we have $\tilde{v}_{j}^{k}(t) + \rho_{k} g_{j}(x^{k}(t),t) \geq 0$, so that $v_{j}^{k}(t) = \max\{\tilde{v}_{j}^{k}(t)+\rho_{k} g_{j}(x^{k}(t),t),0\} = \tilde{v}_{j}^{k}(t) + \rho_{k} g_{j}(x^{k}(t),t)$. Taking into account that $\{\tilde{v}_{j}^{k}(t)\}$ is bounded by construction, $\{\rho_k\}$ is bounded by assumption and $\{g_{j}(x^{k}(t),t)\}$ is bounded for it is convergent, we see that $\{v_j^k(t)\}$ is bounded. On the other hand, $V_j^k(t) = g_j(x^k(t),t)$, so that, by (\ref{convVk}), $g_j(x^k(t),t) \to 0$ as $k \to \infty$ over $N_3$. If $N_3$ is finite, as before, it can be disregarded. Putting these together gives us
\begin{equation} \label{cond3}
v_j^k(t) g_{j}^{-}(x^{k}(t),t)  = v_j^k(t) \max\{-g_{j}(x^{k}(t),t),0\} \to 0. 
\end{equation}
Finally, for $k \in N_4$, we directly have
\begin{equation} \label{cond4}
v_{j}^{k}(t) = \max\{\tilde{v}_{j}^{k}(t)+\rho_{k} g_{j}(x^{k}(t),t),0\} = 0.
\end{equation}
By (\ref{cond1}), (\ref{cond2}), (\ref{cond3}) and (\ref{cond4}) we obtain
\begin{equation*}
v_j^k(t) g_{j}^{-}(x^{k}(t),t)  \to 0 ~ \text{a.e.} ~ t \in [0,T], ~ j \in J,
\end{equation*}
that is, condition (\ref{3}) of Definition \ref{Def_Seq_AKKT} is satisfied. Thus, $\{(x^k,u^k,v^k)\}$ is an AKKT sequence. As $x^k(t) \to \bar{x}(t)$ for almost every $t \in [0,T]$ by assumption, we see that $\bar{x}$ is a pw-AKKT solution of (CTP). 
\end{proof}
\medskip

In Theorem \ref{Teo_Algo_Otima}, the existence of $\bar{x} \in L^\infty([0,T];\mathbb{R}^n)$ satisfying $x^{k}(t) \to \bar{x}(t)$ for almost every $t \in [0,T]$ is guaranteed, for example, if $\bar{x}$ is a limit point of the sequence $\{x^{k}\}_{k\in \mathbb{N}}$ in the $L^1$ norm (as in the proof of Theorem \ref{Teo_AKKT}). Moreover, the feasibility of the point $\bar{x}$ is assumed. It must be acknowledged that it is not possible to guarantee the validity of this assumption; however, it can be asserted that $\bar{x}$ represents a robust candidate for feasibility, as demonstrated in Theorem \ref{Teo_Viabi_Algo}, presented immediately below.

\begin{theorem} \label{Teo_Viabi_Algo}
%Let $\bar{x}$ be a limit point of a sequence $\{x^{k}\}_{k\in \mathbb{N}}$ generated by Algorithm \ref{alg} in the sense that, under subsequences \textcolor{blue}{extraction} (we do not relabel), 
Let $\{x^{k}\}_{k\in \mathbb{N}}$ be a sequence generated by Algorithm \ref{alg} and $\bar{x} \in L^\infty([0,T];\mathbb{R}^n)$ satisfying $x^{k}(t) \to \bar{x}(t)$ for almost every $t \in [0, T]$. Assume that (H1) -- (H3) are satisfied. In addition, assume that
\begin{itemize}
\item[(H4)] There exists an integrable  function $k_{\phi,h,g}$ such that 
$$
\Vert \nabla_x \phi(x,t) \Vert + \Vert \nabla_x h(x,t) \Vert + \Vert \nabla_x g(x,t) \Vert \leq k_{\phi,h,g}(t) ~ \forall \, x \in \bar{B}_{\delta}(t) ~ \text{a.e.} ~ t \in [0,T]
$$
for some $\delta >0$.
\end{itemize}
%If the limit point $\bar{x}$ belongs to $L^\infty([0,T];\mathbb{R}^n)$, 
Then $\bar{x}$ is a stationary point for the problem of minimizing the feasibility factor function:
\begin{equation} \label{feas_fac_func}
\begin{array}{ll}
\text{minimize} & \Theta(x) = \displaystyle{\int_{0}^{T} \left[ \sum_{i=1}^{p} \Big[ h_{i}(x(t),t) \Big]^2 + \sum_{j=1}^{m} \left[ g^{+}_{j}(x(t),t) \right]^2 \right] ~ dt} \\
\text{subject to} & x \in L^\infty([0,T];\mathbb{R}^n).
\end{array}
\end{equation}
\end{theorem}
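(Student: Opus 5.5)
The plan is to show that the pointwise gradient of the integrand of $\Theta$ vanishes at $\bar{x}$, that is,
$$
\sum_{i=1}^{p} h_i(\bar{x}(t),t)\nabla_x h_i(\bar{x}(t),t) + \sum_{j=1}^{m} g_j^{+}(\bar{x}(t),t)\nabla_x g_j(\bar{x}(t),t) = 0 \quad \text{a.e. } t\in[0,T].
$$
This is (up to the factor $2$) the condition of Definition \ref{stationary} for \eqref{feas_fac_func}. As in the proof of Theorem \ref{Teo_Algo_Otima}, I would split into two cases according to whether the penalty sequence $\{\rho_k\}$ generated by Algorithm \ref{alg} is bounded or $\rho_k\to\infty$.

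\emph{Case $\{\rho_k\}$ bounded.} Exactly as in the proof of Theorem \ref{Teo_Algo_Otima}, $\rho_k=\rho_{\bar k}$ eventually. Hence $\max\{\|H^k\|_\infty,\|V^k\|_\infty\}\le \tau^{k-\bar k}C\to 0$. Then $h(x^k(t),t)\to 0$ a.e., and by continuity of $h(\cdot,t)$ we get $h(\bar{x}(t),t)=0$. Also $g_j(x^k(t),t)\le V_j^k(t)\to 0$, so $g_j(\bar{x}(t),t)\le 0$ and $g_j^{+}(\bar{x}(t),t)=0$. Thus $\bar{x}$ is feasible, every term of the displayed sum vanishes, and stationarity holds trivially.

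\emph{Case $\rho_k\to\infty$.} I would expand (H3), i.e.\ \eqref{Eqr_1}, and divide by $\rho_k$:
$$
\frac{1}{\rho_k}\nabla_x\phi(x^k(t),t)+\sum_{i=1}^{p}\Big[h_i(x^k(t),t)+\frac{\tilde u_i^k(t)}{\rho_k}\Big]\nabla_x h_i(x^k(t),t)+\sum_{j=1}^{m}\max\Big\{0,g_j(x^k(t),t)+\frac{\tilde v_j^k(t)}{\rho_k}\Big\}\nabla_x g_j(x^k(t),t)=\frac{\varepsilon^k(t)}{\rho_k}.
$$

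For the left-hand side, I fix $t$ in the full-measure set where $x^k(t)\to\bar{x}(t)$ and where (H1), (H2), (H4) hold. For $k$ large, $x^k(t)\in\bar{B}_\delta(t)$, so $\|\nabla_x\phi(x^k(t),t)\|\le k_{\phi,h,g}(t)$ and the first term tends to $0$. Since $\tilde u^k\in[-M,M]$ and $\tilde v^k\in[0,N]$, the multiplier shifts divided by $\rho_k$ vanish. Continuity of $h,g,\nabla_x h,\nabla_x g$ in $x$ and continuity of $s\mapsto\max\{0,s\}$ then give pointwise convergence of the left side to the displayed expression at $\bar{x}(t)$.

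For the right-hand side, a weakly convergent sequence in $L^1$ is norm-bounded (uniform boundedness principle). Hence $\|\varepsilon^k/\rho_k\|_{1}\le C/\rho_k\to0$. Extracting a subsequence (harmless, since the left side already converges a.e. along the full sequence), $\varepsilon^k(t)/\rho_k\to0$ a.e. Equating the a.e.\ limits of both sides yields the claim.

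The main obstacle is this last passage to the limit on the right-hand side. (H3) only gives weak convergence of $\varepsilon^k$, which by itself says nothing pointwise. The fix is to use that the division by $\rho_k\to\infty$ upgrades weak boundedness to strong convergence to zero. If one prefers to avoid the uniform boundedness principle, an alternative is to test against $\gamma\in L^\infty$: (H4) gives an integrable dominating function for the left side, so dominated convergence shows its $L^1$-limit is the displayed expression. The right side, tested against $\gamma$, tends to $0$, so the expression is zero as an $L^1$ function and hence zero a.e.
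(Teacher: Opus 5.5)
Your proof is correct. It uses the same two-case split as the paper, but you close each case differently, and in the unbounded case your route is the better one.

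In the bounded case, the paper notes that $\bar{x}$ is feasible, hence a global minimizer of $\Theta\geq 0$. It then invokes an external proposition (Prop.~3.2 of do Monte and de Oliveira) to obtain stationarity. You instead observe directly that the gradient $2\sum_i h_i\nabla_x h_i+2\sum_j g_j^{+}\nabla_x g_j$ of the integrand vanishes at a feasible point, which is shorter and self-contained.

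In the case $\rho_k\to\infty$, the paper's argument is exactly your ``alternative''. It tests the divided identity against $\gamma\in L^\infty([0,T];\mathbb{R}^n)$ and passes to the limit by dominated convergence using (H4). It then reads off stationarity from $\int_0^T E(t)\cdot\gamma(t)\,dt=0$ for all $\gamma$, where $E$ is your displayed expression. This keeps to the tested (weak) formulation used in the AKKT definition, but it costs the extra hypothesis (H4).

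Your primary route works pointwise:
\begin{itemize}
\item the weakly convergent sequence $\{\varepsilon^k\}$ is norm-bounded in $L^1$;
\item hence $\varepsilon^k/\rho_k\to 0$ strongly in $L^1$, and a.e.\ along a subsequence;
\item matching pointwise limits of the two sides then gives $E=0$ a.e.\ directly.
\end{itemize}
This needs neither (H4) nor any dominating function. Even your appeal to (H4) for the $\nabla_x\phi$ term is unnecessary, since continuity of $\nabla_x\phi(\cdot,t)$ already bounds $\nabla_x\phi(x^k(t),t)$ for each fixed $t$.

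This is a real gain, because the domination step in the paper's route, and in your alternative, is more delicate than it looks. The integrand contains products such as $h_i(x^k(t),t)\nabla_x h_i(x^k(t),t)$. (H4) together with (A2) only bound each factor by an integrable function, and the product of two integrable functions need not be integrable. Moreover, $x^k(t)\in\bar{B}_\delta(t)$ holds only for $k\geq K(t)$, not uniformly in $t$. So present the pointwise argument as the proof. Either drop the alternative or justify its dominating function separately.
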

\begin{proof}
If $\{\rho_{k}\}$ has a bounded subsequence, reasoning as in the proof of Theorem \ref{Teo_Algo_Otima}, we know from (\ref{convVk}) that
\begin{equation*}
V_{j}^{k}(t) \to 0 ~\text{a.e.} ~ t \in [0,T], ~ j \in J.
\end{equation*}
From the definition of $V_{j}^{k}(t)$, we get
$$
g_j(x^k(t),t) \leq V_j^k(t) ~ \mbox{a.e.} ~ t \in [0,T], ~ j \in J.
$$
Taking limits, we obtain $g_j(\bar{x}(t),t) \leq 0$ for almost every $t \in [0,T]$, $j \in J$. Similarly, 
$$
H_i^{k}(t) \rightarrow 0 ~\text{a.e.} ~ t \in [0,T], ~ i \in I,
$$ 
implying that $h_{i}(\bar{x}(t),t) = 0 $ for almost every $t \in [0,T]$, $i \in I.$ Then, $\bar{x}$ is an optimal solution for the Problem (\ref{feas_fac_func}). According to \cite[Prop. 3.2]{monte:2019}, this implies that $\bar{x}$ is a stationary point.
    
We will now analyze the case where $\{\rho_{k}\}_{k\in \mathbb{N}}$ does not have a bounded subsequence, that is $\rho_k \to \infty$. By (H3), we know that there exists $\{\varepsilon^{k}\}_{k \in \mathbb{N}} \subset L^{1}([0,T];\mathbb{R}^{n})$ with $\varepsilon^{k} \rightharpoonup 0$ such that (\ref{Eqr_1}) holds. By integrating both sides of (\ref{Eqr_1}) over $[0,T]$ and dividing by $\rho_{k}$, we have
\begin{align*}
& \dfrac{1}{\rho_{k}} \int_{0}^{T} \varepsilon^{k}(t) \cdot \gamma(t) ~dt \\
& \quad = \dfrac{1}{\rho_{k}} \int_{0}^{T} \nabla_x \phi(x^{k}(t),t) \cdot \gamma(t) ~dt \\
& \qquad + \int_{0}^{T} \sum_{i=1}^{p} \left[ \dfrac{\tilde{u}_{i}^{k}(t)}{\rho_{k}} + h_{i}(x^{k}(t),t) \right] \nabla_x h_{i}(x^{k}(t),t) \cdot \gamma(t) ~dt \\
& \qquad + \int_{0}^{T} \sum_{j=1}^{m} \left[ \max \left\{ 0, \dfrac{\tilde{v}_{j}^{k}(t)}{\rho_{k}} + g_{j}(x^{k}(t),t) \right\} \right] \nabla_x g_{j}(x^{k}(t),t) \cdot \gamma(t) ~dt
\end{align*}
for all $\gamma \in L^{\infty}([0, T]; \mathbb{R}^{n})$ and each $k\in \mathbb{N}$. Therefore, taking the limit as $k \to \infty$, by the weak convergence of $\{\varepsilon^{k}\}$, the pointwise convergence of $\{x^{k}\}$, Hypothesis (H4), the continuous differentiability of $\phi, h$ and $g$, the boundedness of sequences $\{\tilde{u}^{k}\}$ and $\{\tilde{v}^{k}\}$, and the Dominated Convergence Theorem (see, for example, \cite[Thm. 3.25]{Gordon1994}), we conclude that
\begin{multline*}
\int_0^T \left[ \sum_{i=1}^{p}h_{i}(\bar{x}(t),t)\nabla_x h_{i}(\bar{x}(t),t) \right. \\ + \int_0^T \left. \sum_{j=1}^{m}\max\left\{0, g_{j}(\bar{x}(t),t)\right\}\nabla_x g_{j}(\bar{x}(t),t) \cdot \gamma(t) \right] ~dt = 0
\end{multline*}
for all $\gamma \in L^\infty([0,T];\mathbb{R}^n)$, which, in turn, means that $\bar{x}$ is a stationary point for (\ref{feas_fac_func}). 
\end{proof}

\section{Applications} \label{appl}

This section presents some applications of the augmented Lagrangian method to problems of the form (CTP). The implementation was performed using \textsc{Matlab}\textsuperscript{\circledR} \cite{MATLAB2022}. We set $\rho_{0} = 1$, $\gamma = 1.001$, and $\tau = 10^{-3}$. The general stopping criterion for Algorithm \ref{alg} was based on conditions \eqref{2} and \eqref{3} from Definition \ref{Def_Seq_AKKT}. A precision of $10^{-5}$ was employed, the maximum number of iterations was set to $10^{3}$, and the interval $[0,T]$ was uniformly discretized using $85$ points. To illustrate the practical feasibility of the proposed algorithm, the numerical results of each of the following four examples are presented.

\begin{example}[do Monte {\cite[Example 4.1]{doMonte2018}}] \label{Ex_1}
Let us consider the following problem:
$$
\begin{array}{ll}
\text{minimize} & \displaystyle{\int_{0}^{1} \left[ x_{1}(t)^{2} + x_{2}(t) \right] ~dt} \\
\text{subject to} & -x_{2}(t) \leq 0 ~ \text{a.e.} ~ t \in [0,1], \\
& -x_{1}(t)^{2} - x_{2}(t) \leq 0 ~ \text{a.e.} ~ t \in [0, 1], \\
& x \in L^{\infty}([0,1];\mathbb{R}^{2}).
\end{array}
$$
An optimal solution for this problem is given by $\bar{x}(t) \equiv (0,0)$. Although the full rank \cite{monte:2019} and constant rank constraint \cite{monte:2021} qualifications are not satisfied at $\bar{x}$, the proposed version of the augmented Lagrangian method effectively solved the problem. The results are shown in Figure \ref{Fig_Ex_1}.
\begin{figure}[htbp]
\centering
\includegraphics[width=10.0cm]{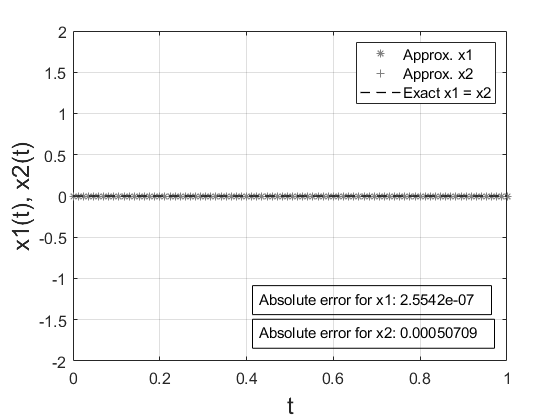}
%\vspace{-0.2 cm}
\caption{Numerical results for $(x_{1}^{0}(t), x_{2}^{0}(t)) \equiv (1, 1)$, $N = 10^{50}$, and $(\tilde{v}_{1}^{1}(t), \tilde{v}_{2}^{1}(t)) \equiv (1, 1)$. The method performed 2 iterations.}
\label{Fig_Ex_1}
\end{figure}
\end{example}

\begin{example}[do Monte and de Oliveira {\cite[Example 3.6]{monte:2020}}] \label{Ex_2}
Consider the continuous-time programming problem presented below:
$$
\begin{array}{ll}
\text{minimize} & \displaystyle{\int_{0}^{1} x_{1}(t) ~dt} \\
\text{subject to} & x_{1}(t)^{2} - 2x_{1}(t) + x_{2}(t) - t \leq 0 ~ \text{a.e.} ~ t \in [0,1], \\
& x_{1}(t)^{2} - 2x_{1}(t) - x_{2}(t) + t \leq 0 ~ \text{a.e.} ~ t \in [0,1], \\
& -x_{1}(t)^{2} + \dfrac{1}{2}x_{1}(t) + x_{2}(t) - t \leq 0 ~ \text{a.e.} ~ t \in [0,1], \\
& x \in L^{\infty}([0,1];\mathbb{R}^{2}).
\end{array}
$$
An optimal solution for this problem is $\bar{x}(t) = (0,t)$ a.e. on $[0,1]$. The results obtained by Algorithm \ref{alg} can be found in Figure \ref{Fig_Ex_2}.
\begin{figure}[htbp]
\centering
\includegraphics[width=10.0cm]{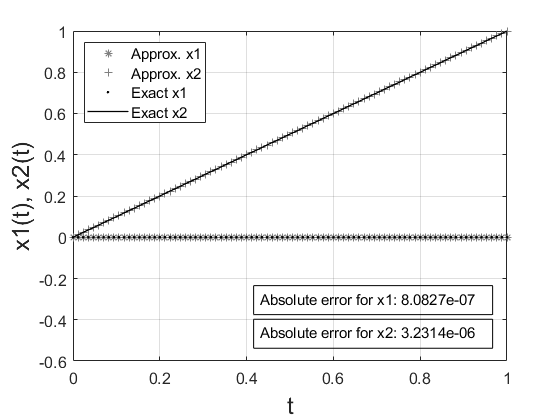}
%\vspace{-0.2 cm}
\caption{Numerical results for $(x_{1}^{0}(t), x_{2}^{0}(t)) \equiv (0.5, 0.5)$, $N = 10^{50}$, and $(\tilde{v}_{1}^{1}(t),\tilde{v}_{2}^{1}(t),\tilde{v}_{3}^{1}(t)) \equiv (1,1,1)$, after 78 iterations.}
\label{Fig_Ex_2}
\end{figure}
\end{example}

\begin{example}[do Monte {\cite[Example 4.4]{doMonte2018}}] \label{Ex_3}
Consider the following problem:
$$
\begin{array}{ll}
\text{minimize} &  \displaystyle{\int_{0}^{1} \left[ (x_{1}(t) - 1)^{2} + (x_{2}(t) - 1)^{2} - x_{3}(t)^{2} \right] ~dt} \\
\text{subject to} & x_{1}(t)^{2} + x_{2}(t)^{2} - x_{3}(t) - 2 = 0 ~ \text{a.e.} ~ t \in [0,1], \\
& 2x_{1}(t)x_{2}(t) - 4x_{2}(t) - x_{3}(t) + 2 \leq 0 ~ \text{a.e.} ~ t \in [0,1], \\
& -x_{1}(t) - \dfrac{1}{2}x_{3}(t) + 1 \leq 0 ~ \text{a.e.} ~ t \in [0,1], \\
& x \in L^{\infty}([0,1];\mathbb{R}^{3}).
\end{array}
$$
The feasible point $\bar{x} \equiv (1,1,0)$ is an optimal solution to this problem. The Algorithm \ref{alg} produced the results shown in Figure \ref{Fig_Ex_3}.
\begin{figure}[htbp]
\centering
\includegraphics[width=10.0cm]{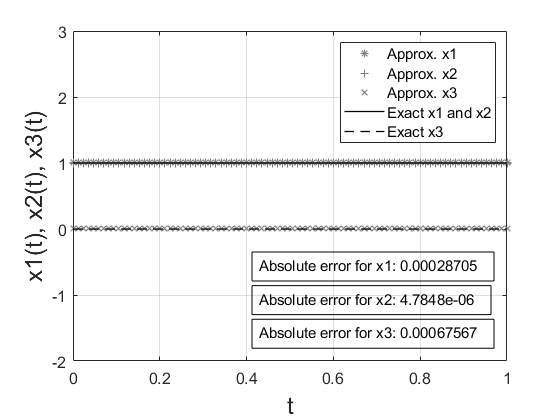}
%\vspace{-0.2 cm}
\caption{Numerical results for $(x_{1}^{0}(t),x_{2}^{0}(t),x_{3}^{0}(t)) \equiv (-100,-100,-100)$, $M = N = 10^{50}$, $(\tilde{u}_{1}^{1}(t),\tilde{v}_{1}^{1}(t),\tilde{v}_{2}^{1}(t)) \equiv (1,1,1)$, and 20 iterations.}
\label{Fig_Ex_3}
\end{figure}
\end{example}

\begin{example}[de Oliveira {\cite[Example 1]{deOliveira2024}}] \label{Ex_4}
Consider the problem given as
$$
\begin{array}{ll}
\text{minimize} & \displaystyle{\int_{0}^{2} c(t)^{T}x(t) ~dt} \\
\text{subject to} & A(t)x(t) - b(t) \leq 0 ~ \text{a.e.} ~ t \in [0, 2], \\
& x \in L^{\infty}([0,2];\mathbb{R}^{2}),
\end{array}
$$
where, for almost every $t \in [0,2]$,
\begin{align*}
c(t) = \left[ \begin{array}{c} (t - 1)\sgn(1 - t) \\ -1 \end{array} \right],
\end{align*}
\begin{align*}
A(t) = \left[ \begin{array}{cc} 0 & -1 \\ -1 & 0 \\ \sgn(t - 1) & \sgn(1 - t) \\ 1 & 1 \\ 0 & 1 \end{array} \right], ~
\end{align*}
and
\begin{align*}
b(t) = \left[ \begin{array}{c} 0 \\ 0 \\ 0 \\ 3 \\ \dfrac{1}{4} + \dfrac{5}{8}t \end{array} \right].
\end{align*}
In \cite{deOliveira2024}, de Oliveira shows that an optimal solution to this problem is given by
$$
\bar{x}_{1}(t) = 
\begin{cases} 
\dfrac{11}{4} - \dfrac{5}{8}t ~ \text{a.e.} ~ t \in [0,1], \\ \dfrac{1}{4} + \dfrac{5}{8}t ~ \text{a.e.} ~ t \in (1,2], 
\end{cases}
\quad \text{and} \quad
\bar{x}_{2}(t) = \dfrac{1}{4} + \dfrac{5}{8}t ~ \text{a.e.} ~ t \in [0,2].
$$
Figure \ref{Fig_Ex_4} shows the numerical results obtained by the proposed augmented Lagrangian method and the optimal solution.
\begin{figure}[htbp]
\centering
\includegraphics[width=10.0cm]{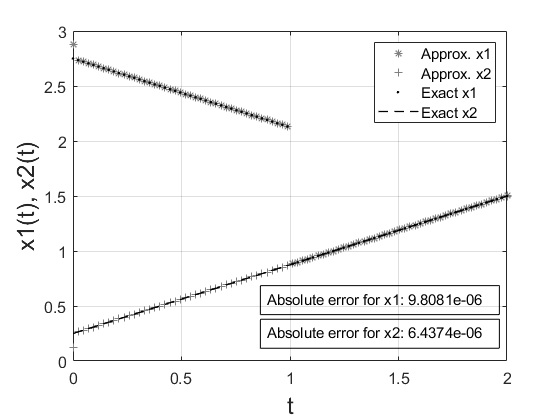}
%\vspace{-0.2 cm}
\caption{Numerical results for $(x_{1}^{0}(t),x_{2}^{0}(t)) \equiv (1,1)$, $N = 10^{50}$, and $(\tilde{v}_{1}^{1}(t),\tilde{v}_{2}^{1}(t),\tilde{v}_{3}^{1}(t),\tilde{v}_{4}^{1}(t)) \equiv (1,1,1,1)$. The method performed 98 iterations.}
\label{Fig_Ex_4}
\end{figure}
\end{example}

In Examples \ref{Ex_1}, \ref{Ex_2}, \ref{Ex_3}, and \ref{Ex_4}, a range of scenarios are examined that present a variety of challenges for Algorithm \ref{alg}. In Example \ref{Ex_1}, we deal with a nonlinear problem in which the full-rank and constant-rank CQs are not satisfied. Example \ref{Ex_2} addresses a problem with nonlinear constraints that explicitly depend on the variable $t$. In Example \ref{Ex_3}, we analyse a nonlinear problem with both equality and inequality constraints. Finally, Example \ref{Ex_4} presents a linear problem with a discontinuous solution. The outcomes illustrated in Figures \ref{Fig_Ex_1}, \ref{Fig_Ex_2}, \ref{Fig_Ex_3}, and \ref{Fig_Ex_4} illustrate that the proposed version of the augmented Lagrangian method effectively addressed each of the aforementioned challenges, including the capture of the discontinuity present in the solution of the problem in Example \ref{Ex_4}.

\section{Conclusion} \label{sec:Conclusion}

In this paper, we present a novel approach to continuous-time optimization problems by defining asymptotic-type optimality conditions, which we refer to as pw-AKKT conditions. We demonstrate that every local optimal solution satisfies such conditions, thereby establishing them as genuine necessary optimality conditions. No constraint qualification was imposed in the problem data. This kind of optimality conditions is particularly useful in a practical sense, as they can be used as precise stopping criteria in numerical methods.

In this regard, a significant contribution of this work is the adaptation of the augmented Lagrangian method with pw-AKKT conditions as a general stopping criterion. The pw-AKKT condition provides a versatile termination criterion that addresses a crucial yet frequently overlooked aspect in the existing literature on the subject.

In our upcoming research, we intend to apply this theoretical framework to real-world physical problems modelled as continuous-time problems (CTP) and assess its effectiveness in fields such as control theory and economics, where these models are frequently utilized.

\section*{Acknowledgment}

This research was supported by [APQ-00453-21, Minas Gerais Research Foundation (FAPEMIG)], [2022/16005-0, S\~ao Paulo Research Foundation (FAPESP)] and [305245/2024-4, National Council for Scientific and Technological Development (CNPq)].

%\printbibliography
\bibliographystyle{plain}
\bibliography{AKKTContinuousTime}

%%%%%%%%%%%%%%%%%
\end{document}